\tikzset{
    state/.style={
           rectangle,
           rounded corners,
           draw=black, thick,
           minimum height=2em,
           inner sep=2pt,
           text centered,
           },
}
\theoremstyle{definition}
\newtheorem{definition}{Definition}
\newtheorem{remark}[definition]{Remark}
\theoremstyle{plain}
\newtheorem{lemma}[definition]{Lemma}
\newtheorem{proposition}[definition]{Proposition}
\renewcommand{\L}{L_\mathlarger{\mathlarger{\cdot}}}
\newcommand{\las}{(\!(}
\newcommand{\ras}{)\!)}
\begin{document}

\title[Splitting of operations for alternative and Malcev structures]{Splitting of operations for alternative and Malcev structures}

\author[S.~Madariaga]{Sara Madariaga}%
\thanks{The research of the author was supported by a postdoctoral fellowship from PIMS (Pacific Institute for the Mathematical Sciences)
and the Spanish Ministerio de Ciencia e Innovaci\'on (MTM2010-18370-C04-03).}

\email{sara.madariaga@unirioja.es}

\begin{abstract}
    In this paper we define pre-Malcev algebras and alternative quadri-algebras and
    prove that they generalize pre-Lie algebras and quadri-algebras respectively to the alternative setting.
    We use the results and techniques from \cite{BaiBellierGuoNi12,GubarevKolesnikov13} to discuss and give explicit computations
    of different constructions in terms of bimodules, splitting of operations, and Rota-Baxter operators.
\end{abstract}

\subjclass[2010]{
17A30, 
17D,   
17D05, 
17D10, 
17A50, 
17C99. 
}

\keywords{Splitting of operations, alternative dendriform, alternative quadri-algebra, Rota-Baxter, pre-Malcev}

\maketitle


\section{Introduction}

Associative dialgebras were introduced by Loday and Pirashvili \cite{LodayPirashvili93} as the universal
enveloping algebras of Leibniz algebras (a noncommutative analogue of Lie algebras).
Loday also defined dendriform dialgebras in his study of algebraic $K$-theory \cite{Loday01}.
Dendriform dialgebras have two operations whose sum is an associative operation.
Moreover, the operads associated to associative dialgebras and dendriform dialgebras are Koszul dual.
Loday and Ronco introduced trialgebras, a generalization of dialgebras, and, dual to them, dendriform trialgebras \cite{LodayRonco04}.
This means one can decompose an associative product into sums of two or three operations satisfying some compatibility conditions.

Aguiar \cite{Aguiar00} first noticed a relation between Rota-Baxter algebras and dendriform dialgebras.
He proved that an associative algebra with a Rota-Baxter operator $R$ of weight zero is a dendriform dialgebra
relative to operations $ a \prec b = aR(b)$ and $a \succ b = R(a)b$.
Ebrahimi-Fard \cite{EbrahimiFard02} generalized this fact to Rota-Baxter algebras of arbitrary weight
and obtained dendriform dialgebras and dendriform trialgebras.
Universal enveloping Rota-Baxter algebras of weight $\lambda$ for dendriform dialgebras and trialgebras were defined by
Ebrahimi-Fard and Guo in \cite{EbrahimiFardGuo08}.

This idea of splitting associative operations gave rise to different algebraic structures with multiple binary operations
such as quadri-algebras, ennea-algebras, NS-algebras, dendriform-Nijenhuis algebras or octo-algebras.
These constructions can be put into the framework of (black square) products of nonsymmetric operads \cite{EbrahimiFardGuo05}.

Analogues of dendriform dialgebras, quadri-algebras and octo-algebras for Lie algebras \cite{BaiLiuNi10,LiuNiBai11},
Jordan algebras \cite{BaiHou12,HouNiBai13}, alternative algebras \cite{NiBai10}, or Poisson algebras \cite{Aguiar00} have been obtained.
They can be regarded as splitting the operations in these latter algebras.
Figure~\ref{fig:classic} illustrates the relations existing between some of these structures.
(Note that Rota-Baxter operators can be replaced by the more general $\mathcal{O}$-operators \cite{Kupershmidt99} in the upper and lower rows.)

\begin{figure}
\includegraphics{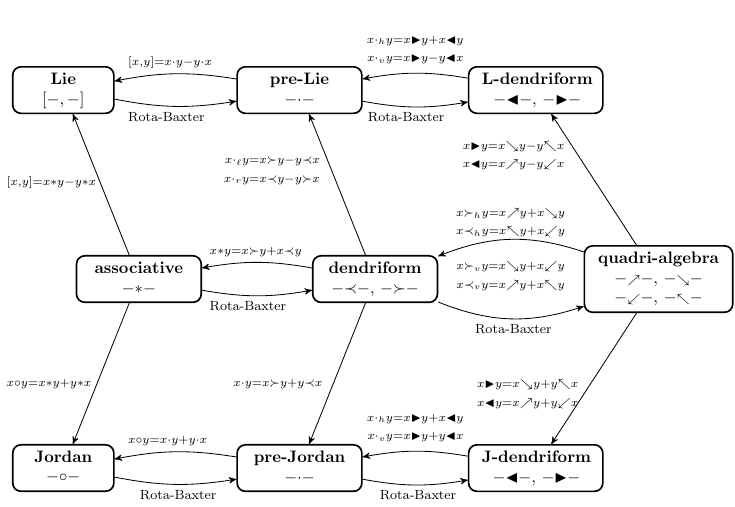}
\caption{Associative setting}
\label{fig:alternative}
\end{figure}

Bai, Bellier, Guo and Ni \cite{BaiBellierGuoNi12} set up a general framework to make precise the notion of splitting of
any binary operad and generalized the relationship of splitting of an operad with the Manin black product given by
Vallette \cite{Vallette08} and also with Rota-Baxter operators.
To do this, they constructed the disuccessor and the trisuccessor of the (non-necessarily quadratic) binary algebraic operad
governing a variety $\mathrm{Var}$ of algebras defined by generating operations and relations.
This allows the splitting of other operads and gives a general way to relate known operads and to produce new operads from them.
They showed that constructing the disuccessor (resp. trisuccessor) of a quadratic binary operad is equivalent to
computing its Manin black product with the operad $\mathrm{PreLie}$ (resp. $\mathrm{PostLie}$) and this construction can
be obtained from a Rota-Baxter operator of weight zero (resp. nonzero).
The resulting operads were named as di- (resp. tri-) $\mathrm{Var}$-dendriform algebras by Gubarev and Kolesnikov and
are the Koszul dual operads of the corresponding di- (resp. tri-) $\mathrm{Var}$-algebras \cite{GubarevKolesnikov13}.

\subsection*{Dendriform dialgebras and disuccessors of algebras}

Following the notation in \cite{GubarevKolesnikov13}, by an $\Omega$-algebra we mean a vector space
equipped with a family of bilinear operations $\Omega = \{ \circ_i \mid i \in I \}.$
We denote by $\mathcal{F}$ the free operad governing the variety of all $\Omega$-algebras.
For every natural number $n > 1$, the space $\mathcal{F}(n)$ can be identified with
the space spanned by all binary trees with $n$ leaves labeled by $x_1, \dots , x_n$,
where each vertex (which is not a leaf) has a label from $\Omega$.
Let $\mathrm{Var}$ be a variety of $\Omega$-algebras defined by a family $S$ of multilinear identities
of any degree greater than one and denote by $\mathcal{P}_{\mathrm{Var}}$ the binary operad
governing the variety $\mathrm{Var}$, i.e., every algebra from $\mathrm{Var}$ is a functor from
$\mathcal{P}_{\mathrm{Var}}$ to the category $\mathrm{Vec}$ of vector spaces with multilinear maps.
Denote by $\Omega^{(2)}$ the set of binary operations $ \{ \prec_i, \succ_i \mid i \in I \} $ and let
$\mathcal{F}^{(2)}$ stand for the free operad governing the varieties of all $\Omega^{(2)}$-algebras.

Let $(R)$ be the operadic ideal of relators defining the operad $\mathcal{P}_{\mathrm{Var}}$
(we mean by this that $\mathcal{P}_{\mathrm{Var}} = \mathcal{F} / (R)$).
An element of $r \in (R)$ is of the form
\[
    r = \sum_\phi \phi(x_1,\dots,x_n) = 0,
\]
where $\phi$ is a composite of operations from $\Omega$. Equivalently, by the identification above, $r$ can be
regarded as a linear combination of the corresponding binary trees. The disuccessor of the operad $\mathcal{P}_{\mathrm{Var}}$
is constructed as $\mathcal{F}^{(2)}/(R)'$, where $(R)'$ is obtained applying the rules in \cite[Prop.~2.4]{BaiBellierGuoNi12}
to each generator of $(R)$ as follows:
for each $k=1, \dots, n$ we replace each occurrence of the operation (vertex label) $\circ_i$ by
\begin{itemize}
    \item $\prec_i$ if the variable (leaf label) $x_k$ appears in the left factor of $\circ_i$
          (the path from the root to the leaf labeled by $x_k$ turns left at this vertex)
    \item $\succ_i$ if the variable (leaf label) $x_k$ appears in the right factor of $\circ_i$
          (the path from the root to the leaf labeled by $x_k$ turns right at this vertex)
    \item $\prec_i + \succ_i$ if the variable (leaf label) $x_k$ does not appear in either the left or the right factor of $\circ_i$
          (the path from the root to the leaf labeled by $x_k$ does not pass through this vertex).
\end{itemize}
This is equivalent to compute the Manin black product $\mathcal{P} \bullet \mathrm{PreLie}$ \cite[Thm.~3.2]{BaiBellierGuoNi12}
(For the precise definition and properties of this construction as well as of the trisuccessor, see \cite{BaiBellierGuoNi12}.)

Quoting \cite{GubarevKolesnikov13}, for a binary operad $\mathcal{P}$ the disuccessor procedure described above gives rise to
what is natural to call identities defining di-$\mathcal{P}$-dendriform algebras and can be easily generalized for algebras with
$n$-ary operations, $n \ge 2$.
If $\mathcal{P}$ is quadratic, then these di-$\mathcal{P}$-algebras are Koszul dual to the corresponding di-$\mathcal{P}^!$-algebras
since $(\mathcal{P}^! \otimes \mathrm{Perm})^! = \mathcal{P} \bullet \mathrm{PreLie}$.

\subsection*{Rota-Baxter operators and their generalizations}

A Rota-Baxter operator in a binary algebra $A$ (with product denoted by juxtaposition) is a linear map $R\colon A \to A$ satisfying
\[
    R(x)R(y) = R\big( R(x)y + xR(y) + \lambda xy \big) \qquad \forall x,y \in A,
\]
where $\lambda$ is a fixed scalar. Associative Rota-Baxter algebras first appeared in 1960 in the works by Baxter on probability.
They became popular in different areas of mathematics and physics and recently they have been used in areas such as quantum field theory,
Yang-Baxter equations, shuffle products, operads, Hopf algebras, combinatorics or number theory (see \cite{EbrahimiFardGuo08} and the references therein).
Rota-Baxter algebras allow us to define dendriform dialgebra ($\lambda=0$, see \cite{Aguiar00}) and trialgebra structures
($\lambda \ne 0$, see \cite{EbrahimiFard02}) in associative algebras.
Moreover, one can construct universal enveloping Rota-Baxter algebras of dendriform di and trialgebras \cite{EbrahimiFardGuo08}.

An interesting question is whether we can recover any dendriform di or trialgebra from a Rota-Baxter algebra structure in the same underlying vector space,
not as a dendriform di or tri subalgebra of the corresponding universal enveloping Rota-Baxter algebra.
A counterexample is given by considering the set of Rota-Baxter operators defined in $V = \mathbb{C} e_1 \oplus \mathbb{C} e_2$ \cite{LiHouBai07};
they produce six non-isomorphic dendriform algebra structures in $V$ but, according to \cite{Zhang08}, there are at least five more dendriform algebra
structures which can be defined in $V$.
To give a positive answer to this problem, Bai, Guo and Ni \cite{BaiGuoNi13} introduced the notion of relative Rota-Baxter operators,
a generalization of both Rota-Baxter operators and $\mathcal{O}$-operators
which allows us to establish bijections between equivalent classes of relative Rota-Baxter operators and dendriform di and trialgebras.
This generalizes the work by Uchino \cite{Uchino08} for dendriform dialgebras.

In this paper we use Rota-Baxter operators to split operations, although a generalization exists in the alternative setting in terms of bimodules:
the $Al$-operators defined by Ni and Bai \cite{NiBai10}. Figure~\ref{fig:alternative} summarizes the results of the present work.

\begin{figure}
\includegraphics{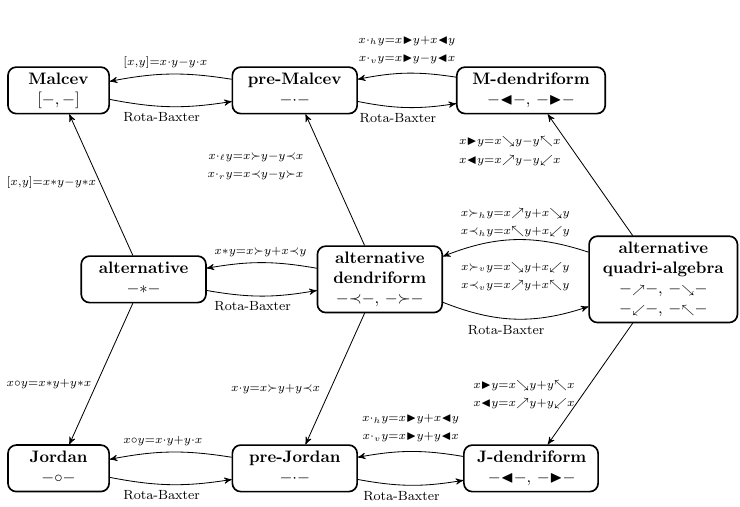}
\caption{Alternative setting}
\label{fig:alternative}
\end{figure}

\subsection*{Nonassociative polynomials and computational methods}

We use computer algebra to prove some of the results in this paper related to multilinear polynomial identities.
These computations were performed with Maple 17 and the algorithms were implemented by the author.
Here is a summary of the concepts and techniques involved.

Let $\Omega$ be a set of multilinear operations defined in a vector space $A$.
By a polynomial identity of degree $d$ satisfied by $A$ we mean an element $f \in F\{ \Omega; X \}$ of the free multioperator algebra
over $\Omega$ with set $X$ of generators such that $f(a_1, \dots, a_d) = 0$ for all $a_1, \dots, a_d \in A$.
We denote by $A_d$ the set of multilinear polynomial identities of degree $d$ satisfied by $A$.
If we assume that the base field $F$ has characteristic zero, then any polynomial identity of degree $d$ is equivalent to a finite subset of $A_d$.
The association types of degree $d$ for $\Omega$ are all possible compositions of operations from $\Omega$ involving $d$ arguments.
The multilinear monomials of degree $d$ for $\Omega$ are obtained by substituting all possible permutations of
$d$ variables for the $d$ arguments of the association types of degree $d$.

If we have a multioperator algebra $(A,\Omega_A)$ and the free algebra $F_\mathcal{V}\{X\}$ in the variety $\mathcal{V}$ of algebras
then we can identify an $n$-ary operation $\omega_A \in \Omega_A$ with a multilinear polynomial $p_{\omega_A} \in F_\mathcal{V}\{X\}_n$.
For each degree $d$ we consider a linear map $E_d \colon A_d \to F_\mathcal{V}\{X\}_d$, which we call the expansion map,
defined by replacing each occurrence of $\omega_A$ by $p_{\omega_A}$ and making the appropriate substitutions.
For example, if $(A,[-,-])$ is a Lie algebra and $F_\mathcal{V}\{X\}$ is the free associative algebra with product $\ast$,
then we can identify $[x,y]$ with $x \ast y - y \ast x$ so that
    \[
        E_3( [[x,y],z ] ) = (x \ast y) \ast z - (y \ast x) \ast z - z \ast (x \ast y) + z \ast (y \ast x).
    \]
The kernel of $E_d$ consists of the polynomial identities of degree $d$ satisfied by the multilinear operations
defined by the elements $\{ p_{\omega_A} \mid \omega_A \in \Omega_A \}$ in $F_\mathcal{V}\{X\}_d$.

A multilinear polynomial $f = f(x_1,\dots,x_d) \in F\{ \Omega; X \}$ can be lifted to higher degrees using the operations $\omega \in \Omega$.
To lift $f$ using an operation $\omega$ of arity $k$ we need to introduce $k{-}1$ new variables so the degree of the liftings is $d+k-1$.
For each $i = 1, \dots, d$ we replace the $i$-th variable in $f$ by $\omega$, obtaining the lifting $f(x_1,\dots,\omega(x_i,x_{d+1},\cdots,x_{d+k-1}),\dots,x_d)$.
For each $j = 1, \dots, k$ we replace the $j$-th argument of $\omega$ by $f$, obtaining the lifting
$\omega(x_{d+1},\dots,f(x_1,\dots,x_d),\dots,x_{d+k-1})$.
To lift $f$ to a given higher degree $D$ we need to consider all possible combinations of liftings which produce a result of degree $D$.

For each degree $d$ the vector space $A_d$ of multilinear polynomials has the structure of an $S_d$-module,
the action given by permuting the subscripts of the variables. The set of monomials of degree $d$ is a linear basis of $A_d$.
The multilinear identities in degree $d$ which are consequences (linear combinations of permutations of the variables)
of the liftings of identities of lower degrees form a submodule $L_d$ of $A_d$.

Throughout this paper we will compute kernels of expansion maps to find certain modules of multilinear polynomial identities
and compare them with submodules arising from liftings of identities in lower degrees.
We also compute the module generators for these subspaces of multilinear identities as well as for the quotient module
$N_d = A_d/L_d$ of new identities in degree $d$.

For a more detailed discussion about the computational methods see \cite{BremnerMurakamiShestakov07}.


\section{Pre-Malcev algebras} \label{sec:preMalcev}

In this section we define pre-Malcev algebras and give different constructions for them in the spirit of \cite{HouNiBai13}.

\begin{definition}[Malcev \cite{Malcev55}]
    A \emph{Malcev algebra} is a vector space $M$ endowed with an anticommutative bilinear product $[-,-]$ satisfying the Malcev identity
    \[
        J( x,y,[x,z] ) = [ J(x,y,z), x ]
    \]
    for all $x,y,z \in M$, where $J(x,y,z) = [[x,y],z] + [[y,z],x] + [[z,x],y]$ is the Jacobian of $x,y,z$.
\end{definition}

\begin{remark}
    If we work over a field of characteristic not 2 then the Malcev identity is equivalent to Sagle's identity:
    \[
        [[x,z],[y,t]] = [[[x,y],z],t] + [[[y,z],t],x] + [[[z,t],x],y] + [[[t,x],y],z].
    \]
    Lie algebras are examples of Malcev algebras.
    Alternative algebras are Malcev admissible algebras, i.e.,
    if $(A,\ast)$ is an alternative algebra then the anticommutator $[x,y] = x \ast y - y \ast x$ defines a Malcev structure in $A$.
    These Malcev algebras are called special.
    An important example of a (special) Malcev algebra is the set of zero-trace octonions.
\end{remark}

\begin{definition}[Kuzmin \cite{Kuzmin68}]
    A \emph{representation} of a Malcev algebra $(M,[-,-])$ on a vector space $V$ is a map $\rho \colon M \to \mathrm{End}(V)$ such that
    \[
        \rho([[x,y],z]) = \rho(x)\rho(y)\rho(z) - \rho(z)\rho(x)\rho(y) + \rho(y)\rho([z,x]) - \rho([y,z])\rho(x)
    \]
    for all $x,y,z \in M$.
\end{definition}

\begin{definition}
    A \emph{pre-Malcev algebra} is a vector space $A$ endowed with a bilinear product $\cdot$ satisfying the identity
    $PM(x,y,z,t) =0$, where
    \begin{align*}
          & PM(x,y,z,t) = (y \cdot z) \cdot (x \cdot t) - (z \cdot y) \cdot (x \cdot t)
          \\
          &
            + ((x \cdot y) \cdot z) \cdot t - ((y \cdot x) \cdot z) \cdot t
            - (z \cdot (x \cdot y)) \cdot t + (z \cdot (y \cdot x)) \cdot t
          \\
          &
            + y \cdot ((x \cdot z) \cdot t) - y \cdot ((z \cdot x) \cdot t)
            - x \cdot (y \cdot (z \cdot t)) + z \cdot (x \cdot (y \cdot t))
    \end{align*}
    for all $x,y,z,t \in A$.
\end{definition}

\begin{remark}
    This definition of pre-Malcev algebras comes from applying the splitting procedure described in \cite{BaiBellierGuoNi12}
    to the anticommutative and Malcev's identities.
    $PM(x,y,z,t)$ is the sole generator of the S4-module generated by the obtained identities
    (we applied the module generators algorithm detailed in \cite{BremnerMadariagaPeresi14}). \\[-4pt]

    \noindent Although some of the next results follow from \cite{BaiBellierGuoNi12,GubarevKolesnikov13},
    we decided to include some of the computations for completeness.
\end{remark}

\begin{proposition} \label{prop:preMalcev}
    Let $(A,\cdot)$ be a pre-Malcev algebra.
    \begin{enumerate}
        \item The commutator $[x,y] = x \cdot y - y \cdot x$ defines a Malcev algebra $\mathfrak{M}(A)$.
                In particular, pre-Malcev algebras are Malcev admissible.
        \item The left multiplication operator $\L$
        \footnote{
        In a binary algebra $(A,\cdot)$, the operator $\L_x$ of left multiplication by an element $x \in A$ is defined as
        $ \L_x \colon A \rightarrow A$, $y \mapsto x \cdot y$.
        }
        gives a representation of the Malcev algebra $\mathfrak{M}(A)$, that is,
                \[
                    \L_{[[x,y],z]} = \L_{x}\L_{y}\L_{z} - \L_{z}\L_{x}\L_{y} + \L_{y}\L_{[z,x]} - \L_{[y,z]}\L_{x}
                \]
                for all $x,y,z \in A$.
    \end{enumerate}
\end{proposition}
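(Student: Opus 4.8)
The plan is to read the defining identity $PM(x,y,z,t)=0$ as an identity of operators acting on the last slot $t$. This will essentially give part (2) for free and will supply the main tool for part (1).

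\emph{Step 1: the operator form of $PM$.} In each of the ten summands of $PM(x,y,z,t)$ the variable $t$ occurs as the outermost right factor, so $PM(x,y,z,t)$ equals a composite of left multiplications applied to $t$. Collecting the summands, the first two produce the composite built from $[y,z]$ and $x$; the four summands of the middle line combine, using $[[x,y],z]=[x,y]\cdot z-z\cdot[x,y]$, into the single left multiplication by $[[x,y],z]$; the first two summands of the last line produce the composite built from $y$ and $[x,z]$; and the remaining two summands are the two triple composites with opposite signs. Rewriting $[x,z]=-[z,x]$, one sees that $PM(x,y,z,t)=0$ for all $x,y,z,t\in A$ is equivalent to precisely the operator identity displayed in part (2), i.e. to Kuzmin's identity for a representation with $\rho=\L$.

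\emph{Step 2: $\mathfrak{M}(A)$ is Malcev (part (1)).} Anticommutativity of $[-,-]$ is immediate from its definition. For the Malcev identity I would expand $J(x,y,[x,z])-[J(x,y,z),x]$ — equivalently, since the ground field has characteristic zero, the multilinear Sagle identity of the Remark — entirely in terms of the single operation $\cdot$, and verify that the resulting degree-$4$ multilinear polynomial lies in the $S_4$-submodule of the free algebra on the operation $\cdot$ generated by $PM(x,y,z,t)$. Over a field of characteristic zero this is a finite problem of linear algebra over the group algebra of $S_4$, and I would carry it out with the computer-algebra machinery described in the Introduction (expansion maps and module generators), which is designed exactly for such verifications. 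Malcev admissibility is then just a restatement of this.

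\emph{Step 3: $\L$ is a representation (part (2)).} Once $\mathfrak{M}(A)$ is known to be a genuine Malcev algebra by Step 2, the operator identity established in Step 1 is, by definition, the assertion that the left multiplication $\L$ is a representation of $\mathfrak{M}(A)$ on the underlying vector space of $A$; this is exactly part (2).

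\emph{Main obstacle.} Steps 1 and 3 are purely formal rearrangements. The real work is Step 2: because $PM$ is written in terms of $\cdot$ rather than of its commutator, fully expanding the Malcev (Sagle) identity for $[-,-]$ produces many monomials spread over the several association types in degree $4$, and one must certify membership in the submodule generated by $PM$. That bookkeeping is what makes the computer computation the natural route; a purely by-hand reduction, while possible in principle, would be long and error-prone.
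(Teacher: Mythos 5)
Your proposal is correct and follows essentially the same route as the paper. Your Step~1 is exactly the paper's proof of part~(2): grouping the ten terms of $PM(x,y,z,t)$ by how $t$ sits in the outermost right slot gives
$PM(x,y,z,t)=\bigl(\L_{[[x,y],z]}-\L_{x}\L_{y}\L_{z}+\L_{z}\L_{x}\L_{y}-\L_{y}\L_{[z,x]}+\L_{[y,z]}\L_{x}\bigr)(t)$,
so $PM=0$ is literally Kuzmin's identity for $\rho=\L$. For part~(1) your strategy (show the expanded Sagle polynomial lies in the $S_4$-module generated by $PM$) is also the paper's, but where you defer the membership check to a computer computation, the paper simply exhibits the certificate by hand: expanding $[[x,z],[y,t]]-[[[x,y],z],t]-[[[y,z],t],x]-[[[z,t],x],y]-[[[t,x],y],z]$ in terms of $\cdot$ yields exactly
$PM(x,z,t,y)-PM(x,t,y,z)+PM(x,t,z,y)-PM(y,x,z,t)+PM(z,x,t,y)-PM(z,y,t,x)+PM(z,t,x,y)+PM(t,x,z,y)$,
which vanishes. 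So the only real difference is that the ``long and error-prone'' bookkeeping you flag as the main obstacle is in fact carried out explicitly in the paper, making the argument self-contained without appeal to the computational machinery; if you want a complete write-up you should either reproduce that explicit linear combination or at least record it as the output of your computation.
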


\begin{proof}
    Part (1) is a consequence of the definition of pre-Malcev algebras and the properties of splitting of operations \cite{BaiBellierGuoNi12}.
    For part (2) we compute
    \begin{align*}
        & \left( \L_{[[x,y]z]} - \L_{x}\L_{y}\L_{z} + \L_{z}\L_{x}\L_{y} - \L_{y}\L_{[z,x]} + \L_{[y,z]}\L_{x} \right) (t)
        \\
        &=
        [[x,y],z] \cdot t - x \cdot(y \cdot (z \cdot t)) + z \cdot (x \cdot (y \cdot t)) - y \cdot ([z,x] \cdot t) + [y,z] \cdot (x \cdot t)
        \\
        &=
        \big( (x \cdot y ) \cdot z - (y \cdot x) \cdot z - z \cdot (x \cdot y) + z \cdot (y \cdot x) \big) \cdot t - x \cdot(y \cdot (z \cdot t))
        \\
        & \quad
        + z \cdot (x \cdot (y \cdot t)) - y \cdot \big((z \cdot x - x \cdot z) \cdot t \big) + (y \cdot z - z \cdot y) \cdot (x \cdot t)
        \\
        &=
        PM(x,y,z,t) = 0.
    \end{align*}
\end{proof}

\begin{proposition}
    Let $A$ be a vector space with a binary operation $\mathlarger{\mathlarger{\cdot}}$.
    Then $(A, - \cdot -)$ is a pre-Malcev algebra if and only if
    $(A,[-,-])$ with the commutator $[x,y] = x \cdot y - y \cdot x$ is a Malcev algebra
    and $\L$ is a representation of $(A,[-,-])$.
\end{proposition}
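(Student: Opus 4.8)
The plan is to read both implications off Proposition~\ref{prop:preMalcev} and its proof. The forward direction is immediate: if $(A,\cdot)$ is a pre-Malcev algebra, then parts (1) and (2) of Proposition~\ref{prop:preMalcev} state precisely that $(A,[-,-])$ is a Malcev algebra and that $\L$ is a representation of it.

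For the converse, the crucial point is that the string of equalities in the proof of Proposition~\ref{prop:preMalcev}(2) is purely formal: expanding $[[x,y],z]$, $[z,x]$ and $[y,z]$ through the commutator and collecting monomials never invokes the pre-Malcev identity. Consequently the operator identity
\[
    PM(x,y,z,t) = \left( \L_{[[x,y],z]} - \L_{x}\L_{y}\L_{z} + \L_{z}\L_{x}\L_{y} - \L_{y}\L_{[z,x]} + \L_{[y,z]}\L_{x} \right)(t)
\]
holds in \emph{every} vector space equipped with a single bilinear product $\cdot$, with $[-,-]$ its commutator. I would first isolate this identity, double-checking that each step of the earlier computation is independent of the defining relation of pre-Malcev algebras, and then apply it: if $(A,[-,-])$ is a Malcev algebra and $\L$ is a representation of it, the defining relation of a representation (the identity in Kuzmin's definition with $\rho=\L$) says exactly that the right-hand side above vanishes for all $x,y,z,t\in A$, whence $PM(x,y,z,t)=0$ and $(A,\cdot)$ is pre-Malcev.

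I do not anticipate a genuine obstacle: the statement is essentially a repackaging of Proposition~\ref{prop:preMalcev} once the displayed operator identity is recognized as formal, and the only delicate bookkeeping is confirming that the reversal is legitimate, i.e., that $PM=0$ was never secretly used in deriving it. It is also worth remarking that, given the representation identity for $\L$, the hypothesis that $(A,[-,-])$ is Malcev is in fact automatic --- $PM\equiv 0$ forces Sagle's identity via the computation in the proof of Proposition~\ref{prop:preMalcev}(1) --- so it is retained in the statement only because the notion of a representation presupposes the underlying Malcev structure.
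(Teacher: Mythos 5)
Your proof is correct and follows the same route as the paper, which simply cites Proposition~\ref{prop:preMalcev} and the definitions; you merely make explicit the key point the paper leaves implicit, namely that the operator identity $PM(x,y,z,t)=\bigl(\L_{[[x,y],z]}-\L_x\L_y\L_z+\L_z\L_x\L_y-\L_y\L_{[z,x]}+\L_{[y,z]}\L_x\bigr)(t)$ is formal and hence reversible. Your closing observation that the Malcev hypothesis is then automatic (modulo the characteristic assumption under which Sagle's identity is equivalent to Malcev's) is also accurate.
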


\begin{proof}
    It follows from Prop.~\ref{prop:preMalcev} above and the definitions of Malcev algebra and representation of a Malcev algebra.
\end{proof}

\begin{remark}
    Pre-Malcev algebras generalize pre-Lie algebras. A \emph{pre-Lie algebra} \cite{Burde06,Manchon11} is a vector space $A$ with a bilinear product
    $\cdot$ satisfying the left-symmetric identity
    \[
        PL(x,y,z) = (x,y,z) - (y,x,z) = 0,
    \]
    for all $x,y,z \in A$, where $x,y,z) = (x \cdot y) \cdot z - x \cdot (y \cdot z)$ is the associator.
    Note that we can write
    \begin{align*}
        PM(x,y,z,t)
        &=
        PL( [x,y],z,t ) - PL( [y,x],z,t )+ PL( x,y,[z,t] )
        \\
        &
        + PL( y,z,[x,t] ) - [z, PL(x,y,t)] + [y, PL(x,z,t) ],
    \end{align*}
    so every pre-Lie algebra is a pre-Malcev algebra.
    The class of pre-Lie algebras is probably the most important one among all other known pre-algebras
    and was independently introduced by Vinberg, Koszul, and Gerstenhaber.
\end{remark}

Examples of pre-Malcev algebras can be constructed from Malcev algebras with Rota-Baxter operators of weight zero.

\begin{remark}
    A \emph{Rota-Baxter operator of weight zero} on a Malcev algebra $(M,[-,-])$ is a linear map $R \colon M \to M$ such that
    \[
        [ R(x), R(y) ] = R\big( [ R(x), y ] + [ x, R(y) ] \big) \qquad \forall x,y \in M.
    \]
\end{remark}

\begin{proposition} \label{prop:Malcev-RB}
    If $R \colon M \to M$ is a Rota-Baxter operator on a Malcev algebra $(M,[-,-])$ then
    there exists a pre-Malcev algebraic structure on $M$ given by
    \[
        x \cdot y = [ R(x),y ] \qquad \forall x,y \in M.
    \]
\end{proposition}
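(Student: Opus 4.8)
The plan is to substitute the new product $x\cdot y=[R(x),y]$ directly into the defining identity and reduce $PM(x,y,z,t)$ to an identity that already holds in the Malcev algebra $(M,[-,-])$. Beyond anticommutativity and the Malcev/Jacobian relations of $[-,-]$, the only tool needed is the Rota-Baxter axiom, which I will use in two equivalent forms. Writing $\langle x,y\rangle:=x\cdot y-y\cdot x=[R(x),y]+[x,R(y)]$ for the commutator of $\cdot$, the weight-zero Rota-Baxter axiom is precisely $R(\langle x,y\rangle)=[R(x),R(y)]$; and, rearranging, it also gives the ``descent'' form $R([R(z),w])=[R(z),R(w)]-R([z,R(w)])$.

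First I would expand the ten terms of $PM(x,y,z,t)$. Since $\cdot$ is linear in each slot and $a\cdot b=[R(a),b]$, any difference of two terms that differ only by an interchange of two inner arguments turns into a commutator containing $R(\langle-,-\rangle)=[R(-),R(-)]$. In this way the first line of $PM$ collapses to $[[R(y),R(z)],[R(x),t]]$, and the four remaining terms of the third line collapse to $[R(y),[[R(x),R(z)],t]]-[R(x),[R(y),[R(z),t]]]+[R(z),[R(x),[R(y),t]]]$. The one delicate part is the six ``$(\,\cdot\,)\cdot t$'' terms on the second line: grouping them in pairs they equal $[R([[R(x),R(y)],z]),t]-[R([R(z),\langle x,y\rangle]),t]$, and now applying the descent form of the Rota-Baxter axiom to the second summand, then anticommutativity of $[-,-]$ together with $R(0)=0$, makes the nested occurrences of $R$ cancel (one uses $[[R(x),R(y)],z]+[z,[R(x),R(y)]]=0$), leaving just $[[[R(x),R(y)],R(z)],t]$.

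Putting $a=R(x),\ b=R(y),\ c=R(z)$, what survives is
\[
    PM(x,y,z,t)=[[[a,b],c],t]+[[b,c],[a,t]]+[b,[[a,c],t]]-[a,[b,[c,t]]]+[c,[a,[b,t]]].
\]
After rewriting $[b,[[a,c],t]]=-[b,[[c,a],t]]$, this is exactly $\mathrm{ad}_{[[a,b],c]}(t)$ minus the expression $\mathrm{ad}_a\mathrm{ad}_b\mathrm{ad}_c(t)-\mathrm{ad}_c\mathrm{ad}_a\mathrm{ad}_b(t)+\mathrm{ad}_b\mathrm{ad}_{[c,a]}(t)-\mathrm{ad}_{[b,c]}\mathrm{ad}_a(t)$, i.e. the defect of Kuzmin's representation identity for the adjoint representation $\mathrm{ad}\colon M\to\mathrm{End}(M)$ of the Malcev algebra $(M,[-,-])$, evaluated at $a,b,c$ and applied to $t$. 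Since the adjoint representation of any Malcev algebra is a representation in the sense of Kuzmin \cite{Kuzmin68}, this defect vanishes, so $PM\equiv 0$ and $(M,\cdot)$ is pre-Malcev.

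I expect the main obstacle to be the bookkeeping in the six-term middle block, specifically spotting the cancellation $R([[R(x),R(y)],z]+[z,[R(x),R(y)]])=R(0)=0$ that removes the double $R$'s, and then matching signs against Kuzmin's identity. A more conceptual alternative would invoke the preceding proposition: it suffices to verify that $\langle-,-\rangle$ is a Malcev bracket and that $L_\cdot=\mathrm{ad}_{R(-)}$ is a representation of it; using $R(\langle x,y\rangle)=[R(x),R(y)]$ one gets $L_{\langle x,y\rangle}=\mathrm{ad}_{[R(x),R(y)]}$ and $L_{\langle\langle x,y\rangle,z\rangle}=\mathrm{ad}_{[[R(x),R(y)],R(z)]}$, which reduces the representation condition to the very same Kuzmin identity for $\mathrm{ad}$, while the direct computation above, giving $PM\equiv0$, simultaneously forces $\langle-,-\rangle$ to satisfy the Malcev identity by that proposition.
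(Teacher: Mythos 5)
Your proof is correct and follows essentially the same route as the paper: expand the ten terms of $PM$, use the Rota--Baxter identity to collapse the nested occurrences of $R$ into brackets of $R$'s, and observe that the surviving expression in $R(x),R(y),R(z),t$ vanishes by the Malcev identity. The only cosmetic differences are your ``descent form'' rearrangement in the middle block (the paper applies the Rota--Baxter identity twice instead, reaching the same $[[[R(x),R(y)],R(z)],t]$) and your phrasing of the final step as Kuzmin's identity for the adjoint representation rather than as Sagle's identity, which is the same instance of the same fact; note also that the middle block has four terms, not six.
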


\begin{proof}
    This result follows from the properties of splitting of operations \cite{BaiBellierGuoNi12}, of which pre-Malcev algebras are a particular case.
\end{proof}

Pre-Malcev algebras are related to alternative dendriform dialgebras analogously to how pre-Lie algebras are related to dendriform dialgebras.

\begin{definition}[Ni and Bai, \cite{NiBai10}]
    An \emph{alternative dendriform dialgebra}
    \footnote{
        In the original definition by Ni and Bai this object is called pre-alternative algebra.
        However, the author prefers to use this new terminology following \cite{GubarevKolesnikov13}
        for varieties of algebras obtained applying the disuccessor procedure described in \cite{BaiBellierGuoNi12}.}
    is a vector space $A$ endowed with two bilinear products $\prec,\succ \colon A \to A$ satisfying the identities
    \begin{align*}
        (x,y,z)_m + (y,x,z)_r &= 0 &
        (x,y,z)_m + (x,z,y)_\ell &= 0 &
        \\
        (x,y,z)_r + (x,z,y)_r &= 0 &
        (x,y,z)_\ell + (y,x,z)_\ell &= 0 &
    \end{align*}
    for all $x,y,z \in A$, where
    \begin{align*}
        (x,y,z)_r &= (x \prec y) \prec z - x \prec (y \prec z + y \succ z) & \text{(right associator)} \\
        (x,y,z)_m &= (x \succ y) \prec z - x \succ (y \prec z) & \text{(middle associator)} \\
        (x,y,z)_\ell &= (x \prec y + x \succ y) \succ z - x \succ (y \succ z) & \text{(left associator)}
    \end{align*}
\end{definition}

\begin{remark}
    The variety of alternative dendriform dialgebras is obtained from the variety of alternative algebras
    by applying the disuccessor procedure described in \cite{BaiBellierGuoNi12}.
    The product $x \ast y = x \prec y + x \succ y$ in an alternative dendriform dialgebra $A$ gives an alternative
    algebra structure on $A$ called the associated alternative algebra of $A$.
    This set of identities defining alternative dendriform dialgebras is redundant:
    any of the identities in the lower row can be deduced from the other one and the identities in the upper row.
    Alternative dendriform dialgebras generalize dendriform dialgebras since the right, middle and left associators
    are identically zero in dendriform dialgebras.
\end{remark}

\begin{proposition} \label{prop:computations}
    Let $(A,\prec,\succ)$ an alternative dendriform dialgebra. Then the dendriform commutator
    \[
        x \cdot y = x \succ y - y \prec x
    \]
    defines a pre-Malcev structure in $A$. Moreover, all the identities of degree 4 satisfied by the dendriform commutator
    in the free alternative dendriform dialgebra are consequences of $PM(x,y,z,t)$.
\end{proposition}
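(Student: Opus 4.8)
The plan is to separate the two assertions. The first — that the dendriform commutator $x\cdot y = x\succ y - y\prec x$ satisfies $PM(x,y,z,t)=0$ — I would prove conceptually from the characterisation of pre-Malcev algebras already established; the second — that $PM$ accounts for all degree-$4$ identities of the dendriform commutator — is an inherently computational statement that I would settle with the polynomial-identity machinery of the Introduction.

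For the first assertion, by Proposition~\ref{prop:preMalcev} together with the proposition immediately following it, it suffices to check that the commutator bracket of $\cdot$ is a Malcev bracket and that the left-multiplication operator of $\cdot$ represents it. Writing $x\ast y = x\prec y + x\succ y$ for the associated alternative product and $[x,y]=x\cdot y-y\cdot x$, a one-line computation gives $[x,y]=x\ast y-y\ast x$, so $(A,[-,-])$ is exactly the Malcev algebra $\mathfrak{M}(A,\ast)$ of the alternative — hence Malcev-admissible — algebra $(A,\ast)$. Next, with $L^{\succ}_x(y)=x\succ y$ and $R^{\prec}_x(y)=y\prec x$, the left-multiplication operator of $\cdot$ is $L^{\cdot}_x = L^{\succ}_x-R^{\prec}_x$, and the four defining identities of an alternative dendriform dialgebra amount to saying that $(A;\,L^{\succ},R^{\prec})$ is a bimodule over the alternative algebra $(A,\ast)$. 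For any alternative algebra $B$ and any $B$-bimodule $(V;L,R)$ the split null extension $B\oplus V$ is again alternative, hence Malcev-admissible, and reading off its Malcev bracket identifies $V$ with a module over $\mathfrak{M}(B)$ via $L-R$; applied to $B=(A,\ast)$, $V=A$, $L=L^{\succ}$, $R=R^{\prec}$ this shows that $L^{\cdot}=L^{\succ}-R^{\prec}$ represents $\mathfrak{M}(A,\ast)=\mathfrak{M}(A,[-,-])$, which is what was needed. Alternatively — and this is in any case the route underlying the second assertion — one substitutes $x\cdot y = x\succ y-y\prec x$ into $PM$ (conveniently via the decomposition $PM=PL([x,y],z,t)-PL([y,x],z,t)+PL(x,y,[z,t])+PL(y,z,[x,t])-[z,PL(x,y,t)]+[y,PL(x,z,t)]$ recorded above), expands the resulting element — about $80$ monomials — in the free $\{\prec,\succ\}$-algebra, and reduces it modulo the $S_4$-submodule of the degree-$4$ multilinear component spanned by all liftings to degree $4$ of the four degree-$3$ defining identities; the reduction is $0$.

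For the completeness assertion I would set up the expansion map explicitly. Let $\mathrm{Magma}$ denote the free algebra on one binary operation $\cdot$, whose degree-$4$ multilinear component $\mathrm{Magma}_4$ is spanned by the $C_3\cdot 4!=120$ monomials, and let $E_4\colon \mathrm{Magma}_4\to\mathcal{A}_4$ be the expansion into the degree-$4$ component of the free alternative dendriform dialgebra $\mathcal{A}$ on $X$ determined by $x\cdot y\mapsto x\succ y-y\prec x$ — computed by expanding in the free $\{\prec,\succ\}$-algebra and reducing modulo the liftings of the four defining identities, exactly as above. By construction $\ker E_4$ is the $S_4$-module of all degree-$4$ multilinear identities of the dendriform commutator. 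Since the operad governing pre-Malcev algebras has no relations in degrees below $4$, the degree-$4$ consequences of $PM$ form precisely the $S_4$-submodule $\langle PM\rangle\subseteq\mathrm{Magma}_4$ spanned by the $24$ permutations of $PM(x,y,z,t)$, so the claim is the equality $\ker E_4=\langle PM\rangle$. One inclusion is the first assertion; for the other I would compute $\dim\langle PM\rangle$ (by row-reducing the $24$ permuted copies, or via its $S_4$-isotypic decomposition) and $\dim\ker E_4=120-\operatorname{rank}E_4$ and verify they coincide — or, more robustly, match the full $S_4$-module structures of $\ker E_4$ and $\langle PM\rangle$.

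The conceptual input here is modest; the genuine obstacle is the size of the bookkeeping and of the linear algebra. Expanding $PM$ lands in a $960$-dimensional space ($C_3\cdot 2^3\cdot 4!$ multilinear $\{\prec,\succ\}$-monomials in degree $4$), the submodule of liftings there is spanned by on the order of $4\times 10\times 24$ vectors, and the completeness claim additionally needs rank computations keeping track of the $S_4$-action. I would carry all of this out on a computer algebra system over $\mathbb{Q}$, rechecking modulo a couple of large primes and organising the output in terms of the multiplicities of the five irreducible $S_4$-modules, as is done for the other computational results in the paper.
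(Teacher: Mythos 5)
Your proposal is correct, and for the first assertion it takes a genuinely different route from the paper. The paper proves both claims at once by a single machine computation: it builds the $720\times 1080$ block matrix of the expansion map $E_4$ (the $S_4$-orbits of the liftings of the four defining identities on top, the expansions of the $120$ nonassociative monomials below), computes the row canonical form, and reads off from the rows whose leading $1$s fall in the right-hand block that the module of identities of the dendriform commutator is minimally generated by $PM$ alone; the fact that the commutator satisfies $PM$ is contained in that output rather than argued separately. Your conceptual derivation of the first assertion --- $[x,y]=x\cdot y-y\cdot x$ is the Malcev bracket of the associated alternative algebra $(A,\ast)$, the four axioms say exactly that $(L_\succ,R_\prec)$ is an alternative bimodule structure on $A$ over $(A,\ast)$, the split null extension is alternative hence Malcev-admissible, and reading off its commutator exhibits $L_\succ-R_\prec$ as a Kuzmin representation of $\mathfrak{M}(A,\ast)$, so the characterisation of pre-Malcev algebras applies --- is valid and arguably more illuminating, since it explains \emph{why} the commutator is pre-Malcev; its one external input, the equivalence of Kuzmin's identity with the split-null-extension definition of a Malcev module, is standard (it follows by linearising Sagle's identity with one argument in the module) but should be cited or verified. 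For the completeness assertion your plan coincides in substance with the paper's: same expansion map, same $120$- and $960$-dimensional spaces, same submodule of liftings; you would compare $\ker E_4$ with the $S_4$-span of the $24$ permutations of $PM$ by dimension or isotypic decomposition, where the paper extracts a minimal generating set directly from the row canonical form. These are computationally equivalent, and in either case the linear algebra cannot be avoided; your description of how to organise and certify it is adequate.
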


\begin{proof}
    We consider the expansion map $E_4$ from the free binary nonassociative algebra $\mathsf{B}$
    to the free alternative dendriform dialgebra $\mathsf{BB}$.

    There are 5 association types in degree 4 for the free binary nonassociative algebra:
    \[
        ( ( - \cdot - ) \cdot - ) \cdot -
        \quad
        ( - \cdot ( - \cdot - ) ) \cdot -
        \quad
        ( - \cdot - ) \cdot ( - \cdot - )
        \quad
        - \cdot ( ( - \cdot - ) \cdot - )
        \quad
        - \cdot ( - \cdot ( - \cdot - ) )
    \]
    so there are 120 multilinear $\mathsf{B}$-monomials (apply the 24 permutations of 4 variables to the 5 association types).
    There are 40 association types in degree 4 for the free algebra with two binary operations,
    so there are 960 multilinear $\mathsf{BB}$-monomials.

    We represent $E_4$ by the $720 \times 1080$ block matrix
    \[
        E_4 = \left[ \begin{array}{c|c} A & Z \\ \midrule E & I \end{array} \right],
    \]
    where the columns of the left blocks are labeled by the $\mathsf{BB}$-monomials
    and the columns of the right blocks are labeled by the $\mathsf{B}$-monomials.
    The rows of $A$ contain the coefficient vectors of the permutations of the 25 generators
    of the module of liftings of the identities defining alternative dendriform dialgebras to degree 4.
    The rows of $E$ contain the coefficient vectors of the expansions of the nonassociative binary multilinear monomials
    of degree 4 into the free $\mathsf{BB}$-algebra using the dendriform commutator.
    $Z$ is a zero $120 \times 960$ block and $I$ is the identity matrix of size 120.

    We compute the row canonical form of this matrix, and identify the rows whose leading 1s are in the right part of the matrix.
    These 20 rows are the coefficient vectors of a set of (possibly redundant) generators for the module of identities
    satisfied by the dendriform commutator in the free alternative dendriform dialgebra.
    We find that the minimal subset of generators contains only one element, which is exactly $PM(x,y,z,t)$.
    (We note that same procedure applied to the expansion map $E_3$ does not give any identities in degree 3 for the dendriform commutator
    in the free alternative dendriform dialgebra.)
\end{proof}

\begin{remark}
    We can obtain an explicit formula of $E_4(PM(x,y,z,t))$ in terms of the linear basis of the submodule of liftings
    of the alternative dendriform dialgebra identities to degree 4.
    We construct a $960 \times 553$ matrix; in columns 1–552 we put the coefficient vectors of the basic identities
    and in column 553 we put the coefficient vector of $E_4(PM(x,y,z,t))$ (with respect to the basis of dendriform multilinear monomials).
    The last column of the row canonical form of this matrix contains the coefficients of $E_4(PM(x,y,z,t))$ with respect to
    the basic identities. This expression for $E_4(PM(x,y,z,t))$ has 109 terms.
\end{remark}


\section{Alternative quadri-algebras}

We apply the splitting of operations described in \cite{BaiBellierGuoNi12} to the variety of alternative dendriform dialgebras
to obtain a variety of algebras generalizing the quadri-algebras of Aguiar and Loday \cite{AguiarLoday04}.

\begin{definition}\label{def:altquad}
    An \emph{alternative quadri-algebra}
    is a vector space $A$ endowed with four bilinear products $\nearrow, \searrow, \swarrow, \nwarrow \colon A \to A$
    satisfying the identities
    \begin{alignat*}{2}
        \las x,y,z \ras_r + \las y,x,z \ras_m = 0 &
        \qquad \qquad
        \las x,y,z \ras_r + \las x,z,y \ras_r = 0
        \\
        \las x,y,z \ras_n + \las y,x,z \ras_w = 0 &
        \qquad \qquad
        \las x,y,z \ras_n + \las x,z,y \ras_{ne} = 0
        \\
        \las x,y,z \ras_{ne} + \las y,x,z \ras_e = 0 &
        \qquad \qquad
        \las x,y,z \ras_w + \las x,z,y \ras_{sw} = 0
        \\
        \las x,y,z \ras_{sw} + \las y,x,z \ras_s = 0 &
        \qquad \qquad
        \las x,y,z \ras_m + \las x,z,y \ras_\ell = 0
        \\
        \las x,y,z \ras_\ell + \las y,x,z \ras_\ell = 0 &
    \end{alignat*}
    for all $x,y,z \in A$, where
    \begin{align*}
        \las x,y,z \ras_r &= ( x \nwarrow y ) \nwarrow z - x \nwarrow ( y * z ) & \text{(right associator)} \\
        \las x,y,z \ras_\ell &= ( x * y ) \searrow z - x \searrow ( y \searrow z ) & \text{(left associator)} \\
        \las x,y,z \ras_{ne} &= ( x \wedge y ) \nearrow z - x \nearrow ( y \succ z ) & \text{(north-east associator)} \\
        \las x,y,z \ras_{sw} &= ( x \prec y ) \swarrow z - x \swarrow ( y \vee z ) & \text{(south-west associator)} \\
        \las x,y,z \ras_n &= ( x \nearrow y ) \nwarrow z - x \nearrow ( y \prec z ) & \text{(north associator)} \\
        \las x,y,z \ras_w &= ( x \swarrow y ) \nwarrow z - x \swarrow ( y \wedge z) & \text{(west associator)} \\
        \las x,y,z \ras_s &= ( x \succ y ) \swarrow z - x \searrow ( y \swarrow z ) & \text{(south associator)} \\
        \las x,y,z \ras_e &= ( x \vee y ) \nearrow z - x \searrow ( y \nearrow z ) & \text{(east associator)} \\
        \las x,y,z \ras_m &= ( x \searrow y ) \nwarrow z - x \searrow ( y \nwarrow z )  & \text{(middle associator)}
    \end{align*}
    \begin{alignat*}{2}
    x \succ y = x \nearrow y + x \searrow y & \qquad x \prec y = x \nwarrow y + x \swarrow y \\*
    x \vee y = x \searrow y + x \swarrow y & \qquad x \wedge y = x \nearrow y + x \nwarrow y
    \end{alignat*}
    and
    \begin{align*}
        x * y  & \; = \; x \succ y + x \prec y \; = \; x \searrow y + x \nearrow y + x \nwarrow y + x \swarrow y.
    \end{align*}
\end{definition}

\begin{remark}
    Note that in every quadri-algebra all these associators are identically zero
    (their expressions are obtained rewriting the identities defining quadri-algebras)
    so alternative quadri-algebras generalize quadri-algebras.
    These nine identities are independent (there are not redundancies).
\end{remark}

\begin{lemma}
    Let $(A,\nearrow, \searrow, \swarrow, \nwarrow)$ be an alternative quadri-algebra. Then
    $(A,\prec,\succ)$ and $(A,\vee,\wedge)$ are alternative dendriform dialgebras
    (called respectively horizontal and vertical alternative dendriform structures associated to $A$)
    and $(A,\ast)$ is an alternative algebra.
\end{lemma}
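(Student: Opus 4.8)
The plan is to verify each of the three claimed structures by direct substitution, exploiting the defining identities of the alternative quadrialgebra together with the bookkeeping relations among the eight ``sum'' operations $\succ,\prec,\vee,\wedge,\ast$. The key observation is that the nine quadrialgebra identities were produced by applying the disuccessor procedure to the four alternative dendriform dialgebra identities; so the strategy is essentially to ``collapse'' the bigraded system back to a bigraded-by-one system in two independent ways (horizontal and vertical), and then collapse once more to get the alternative algebra.

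First I would treat the horizontal structure $(A,\prec,\succ)$. I need to check the four alternative dendriform dialgebra identities for $\prec$ and $\succ$, namely $(x,y,z)_m+(y,x,z)_r=0$, $(x,y,z)_m+(x,z,y)_\ell=0$, $(x,y,z)_r+(x,z,y)_r=0$, and $(x,y,z)_\ell+(y,x,z)_\ell=0$, where the associators $(-,-,-)_r,(-,-,-)_m,(-,-,-)_\ell$ are formed from $\prec,\succ$ and $\ast$. The plan is to expand each such associator: for instance $(x,y,z)_r=(x\prec y)\prec z - x\prec(y\ast z)$, substitute $x\prec y = x\nwarrow y + x\swarrow y$ and $x\ast y = x\searrow y+x\nearrow y+x\nwarrow y+x\swarrow y$, and recognize the result as a sum of quadrialgebra associators $\las-,-,-\ras_r$ and $\las-,-,-\ras_w$ (since the west associator is the one whose left operation is $\swarrow$ followed by $\nwarrow$). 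Concretely I expect $(x,y,z)_r = \las x,y,z\ras_r + \las x,y,z\ras_w$ and similarly each dendriform associator decomposes as a sum of two quadrialgebra associators. Then the four dendriform identities become consequences of pairs of the nine quadrialgebra identities: e.g.\ $(x,y,z)_r+(x,z,y)_r = (\las x,y,z\ras_r+\las x,z,y\ras_r) + (\las x,y,z\ras_w+\las x,z,y\ras_{sw})$, and both parenthesized sums vanish by the second identity of the second column and the first identity of the third column. I would carry out the same for the vertical structure $(A,\vee,\wedge)$, where $\vee = \searrow+\swarrow$ and $\wedge=\nearrow+\nwarrow$ play the roles of $\prec$ and $\succ$; the north, east, north-east, south, south-west associators are precisely the ones needed, and the four vertical dendriform identities will again each split into two of the nine.

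Finally, for $(A,\ast)$ being alternative: since $(A,\prec,\succ)$ is an alternative dendriform dialgebra, the Remark following that definition (the associated alternative algebra) gives immediately that $x\ast y = x\prec y + x\succ y$ defines an alternative algebra, and $x\prec y+x\succ y = x\searrow y+x\nearrow y+x\nwarrow y+x\swarrow y = x\ast y$, so we are done with no extra work. (Alternatively one gets the same conclusion from the vertical structure, consistently.)

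The main obstacle will be the bookkeeping in the horizontal/vertical reduction: one must correctly match each dendriform associator $(x,y,z)_r$, $(x,y,z)_m$, $(x,y,z)_\ell$ (and their vertical analogues) with the right pair among the nine quadrialgebra associators, being careful about which of the four primitive operations sits at each of the two internal nodes of the degree-three trees, and about the order of arguments in identities like $\las x,z,y\ras$. I do not expect any genuine difficulty beyond this — once the decomposition of each dendriform associator into two quadrialgebra associators is pinned down, every dendriform identity falls out as a sum of two of the given nine, and the alternative algebra claim is then free.
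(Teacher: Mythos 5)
Your overall strategy is exactly the paper's: expand each horizontal (resp.\ vertical) dendriform associator in terms of the four quadrialgebra operations, recognize it as a sum of the nine quadrialgebra associators, and conclude that each dendriform identity is a consequence of the given nine; the claim for $(A,\ast)$ then follows for free, as you say, from the remark that the sum of the two operations of an alternative dendriform dialgebra is alternative. However, your concrete decomposition is wrong: each dendriform associator is a sum of \emph{three} quadrialgebra associators, not two. For the right associator one has
\[
(x,y,z)_r=\las x,y,z\ras_r+\las x,y,z\ras_w+\las x,y,z\ras_{sw},
\]
since $(x\prec y)\prec z$ splits as $(x\nwarrow y)\nwarrow z+(x\swarrow y)\nwarrow z+(x\prec y)\swarrow z$ and the inner terms $-x\nwarrow(y\ast z)$, $-x\swarrow(y\wedge z)$, $-x\swarrow(y\vee z)$ recombine using $y\wedge z+y\vee z=y\ast z$. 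Your two-term formula $(x,y,z)_r=\las x,y,z\ras_r+\las x,y,z\ras_w$ leaves the uncancelled remainder $\las x,y,z\ras_{sw}=(x\prec y)\swarrow z-x\swarrow(y\vee z)$, and your displayed verification of $(x,y,z)_r+(x,z,y)_r=0$ is accordingly missing its third pair $\las x,y,z\ras_{sw}+\las x,z,y\ras_w$, which also vanishes by the $w$/$sw$ identity with $y$ and $z$ interchanged. Likewise $(y,x,z)_m=\las y,x,z\ras_s+\las y,x,z\ras_n+\las y,x,z\ras_m$ and $(x,z,y)_\ell=\las x,z,y\ras_\ell+\las x,z,y\ras_{ne}+\las x,z,y\ras_e$, so each dendriform identity uses three of the nine quadrialgebra identities. (A smaller bookkeeping slip: in the vertical structure it is $\wedge$, not $\vee$, that plays the role of $\prec$; compare $x\wedge_R y=x\ast R(y)$ with $x\prec_R y=x\ast R(y)$.) With these corrections your argument goes through and coincides with the paper's proof.
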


\begin{proof}
    This follows from \cite{BaiBellierGuoNi12,GubarevKolesnikov13}, as it is a particular case of splitting of operations.
    Explicitly, for $(A,\succ,\prec)$ we have that
    \begin{align*}
        &(x,y,z)_r + (y,x,z)_m
        \\
        &=
        \las x,y,z \ras_r + \las x,y,z \ras_{sw} + \las x,y,z \ras_w
        \las y,x,z \ras_s + \las y,x,z \ras_n + \las y,x,z \ras_m \\
        &(x,y,z)_r + (y,x,z)_m
        \\
        &(x,y,z)_r + (y,x,z)_r
        \\
        &=
        \las x,y,z \ras_r + \las x,y,z \ras_{sw} + \las x,y,z \ras_w
        \las x,z,y \ras_r + \las x,z,y \ras_w + \las x,z,y \ras_{sw} \\
        &(x,y,z)_m + (y,x,z)_l
        \\
        &=
        \las x,y,z \ras_s + \las x,y,z \ras_m + \las x,y,z \ras_n
        \las x,z,y \ras_\ell + \las x,z,y \ras_{ne} + \las x,z,y \ras_e,
    \end{align*}
    so $(A,\succ,\prec)$ is an alternative dendriform dialgebra. Similarly for $(A,\vee,\wedge)$.
    $(A,\ast)$ is the associated alternative algebra to both $(A,\succ,\prec)$ and $(A,\vee,\wedge)$.
\end{proof}

\begin{remark} \label{remark:new}
    A natural question arises: can any alternative algebra be obtained from alternative quadri-algebras using the construction above?
    The answer is negative: computations similar to those of Proposition \ref{prop:computations} show that
    the S3-module generated by the (linearization of the) alternative identity is a 3-dimensional S3-submodule of
    the module of identities satisfied by the operation $*$ in a dendriform quadri-algebra (which has dimension 5).
\end{remark}

Analogously to what happens for quadri-algebras \cite{AguiarLoday04}, Rota-Baxter operators allow different constructions
for alternative quadri-algebras.

\begin{remark}
    A \emph{Rota-Baxter operator on an alternative algebra} $(A,\ast)$ is a linear map $R \colon A \to A$ such that
    \[
        R(x) \ast R(y) = R \left( R(x) \ast y + x \ast R(y) \right), \\
    \]
    for all $x,y, \in A$.
    Rota-Baxter operators on alternative algebras are a particular case of $Al$-ope\-ra\-tors, defined in \cite{NiBai10}.
    A construction of alternative dendriform dialgebras from alternative algebras and $Al$-operators is given in \cite[Prop.~2.11]{NiBai10}.
    In the particular, we obtain an alternative dendriform dialgebra $A$ by defining the operations
    \[
        x \prec_R y = x \ast R(y), \quad x \succ_R y = R(x) \ast y
    \]
    in an alternative algebra $(A,\ast)$ with a Rota-Baxter operator $R$.
\end{remark}

\begin{remark}
    A \emph{Rota-Baxter operator on an alternative dendriform dialgebra} $A$ is a linear map $R \colon A \to A$ such that
    \begin{align*}
       & R(x) \succ R(y) = R \big( R(x) \succ y + x \succ R(y) \big), \\
       & R(x) \prec R(y) = R \big( R(x) \prec y + x \prec R(y) \big)
    \end{align*}
    for all $x,y, \in A$.
    Adding these two equations we get that a Rota-Baxter operator in an alternative dendriform dialgebra $A$ is also
    a Rota-Baxter operator on the associated alternative algebra.
\end{remark}

\begin{proposition} \label{prop:dendritoquadri}
    Let $(A\prec,\succ)$ an alternative dendriform dialgebra and $R \colon A \to A$ a Rota-Baxter operator on $A$.
    Then $(A,\nearrow, \searrow, \swarrow, \nwarrow)$ is an alternative quadri-algebra with the operations
    \begin{align*}
        x {\nearrow_{_R}} y = x \succ R(y)
        \qquad \qquad
        x {\searrow_{_R}} y = R(x) \succ y
        \\
        x {\swarrow_{_R}} y = R(x) \prec y
        \qquad \qquad
        x {\nwarrow_{_R}} y = x \prec R(y)
    \end{align*}
\end{proposition}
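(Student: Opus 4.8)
The plan is to reduce the nine defining identities of an alternative quadrialgebra, evaluated on the four operations $\nearrow_{R},\searrow_{R},\swarrow_{R},\nwarrow_{R}$, to the four defining identities of an alternative dendriform dialgebra for $(A,\prec,\succ)$, which hold by hypothesis. The mechanism is a dictionary that rewrites each of the nine ``quadri'' associators $\las x,y,z\ras_\bullet$ of the constructed operations as one of the three ``dendriform'' associators $(-,-,-)_r$, $(-,-,-)_m$, $(-,-,-)_\ell$ of $(A,\prec,\succ)$, with each slot filled by one of $x,y,z$ or its image under $R$.

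First I would record the derived operations. Writing $\succ_R,\prec_R,\vee_R,\wedge_R,*_R$ for the operations built from $\nearrow_{R},\searrow_{R},\swarrow_{R},\nwarrow_{R}$ exactly as $\succ,\prec,\vee,\wedge,*$ are built from $\nearrow,\searrow,\swarrow,\nwarrow$ in the definition, one gets
\begin{align*}
    x \succ_R y &= x\succ R(y)+R(x)\succ y, & x\prec_R y &= x\prec R(y)+R(x)\prec y, \\
    x\wedge_R y &= x\ast R(y), & x\vee_R y &= R(x)\ast y, \\
    x*_R y &= x\ast R(y)+R(x)\ast y, &&
\end{align*}
where $\ast$ is the associated alternative product of $(A,\prec,\succ)$. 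The two Rota-Baxter equations for $\succ$ and $\prec$ (and their sum, which is the Rota-Baxter equation for $\ast$, as noted in the Remark above) then give the intertwining relations
\[
    R(x\succ_R y)=R(x)\succ R(y), \qquad R(x\prec_R y)=R(x)\prec R(y), \qquad R(x*_R y)=R(x)\ast R(y) .
\]
These three identities are the only place the Rota-Baxter property is used.

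Next I would expand each $\las x,y,z\ras_\bullet$ using only the definitions of $\nearrow_{R},\searrow_{R},\swarrow_{R},\nwarrow_{R}$ and the intertwining relations. For instance $\las x,y,z\ras_r = (x\nwarrow_{R} y)\nwarrow_{R} z - x\nwarrow_{R}(y*_R z) = (x\prec R(y))\prec R(z) - x\prec\big(R(y)\ast R(z)\big) = (x,R(y),R(z))_r$, and $\las x,y,z\ras_\ell = (x*_R y)\searrow_{R} z - x\searrow_{R}(y\searrow_{R} z) = \big(R(x)\ast R(y)\big)\succ z - R(x)\succ(R(y)\succ z) = (R(x),R(y),z)_\ell$. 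Carrying this out for all nine associators yields the dictionary
\begin{align*}
    \las x,y,z\ras_r &= (x,R(y),R(z))_r, & \las x,y,z\ras_w &= (R(x),y,R(z))_r, & \las x,y,z\ras_{sw} &= (R(x),R(y),z)_r, \\
    \las x,y,z\ras_n &= (x,R(y),R(z))_m, & \las x,y,z\ras_m &= (R(x),y,R(z))_m, & \las x,y,z\ras_s &= (R(x),R(y),z)_m, \\
    \las x,y,z\ras_{ne} &= (x,R(y),R(z))_\ell, & \las x,y,z\ras_e &= (R(x),y,R(z))_\ell, & \las x,y,z\ras_\ell &= (R(x),R(y),z)_\ell.
\end{align*}
Finally, substituting this dictionary into the nine defining identities of an alternative quadrialgebra, each one becomes exactly one of the four identities $(x,y,z)_m+(y,x,z)_r=0$, $(x,y,z)_r+(x,z,y)_r=0$, $(x,y,z)_m+(x,z,y)_\ell=0$, $(x,y,z)_\ell+(y,x,z)_\ell=0$ of an alternative dendriform dialgebra, with $x,y,z$ replaced by the appropriate elements among $x,y,z,R(x),R(y),R(z)$; e.g.\ the first quadri identity $\las x,y,z\ras_r + \las y,x,z\ras_m = 0$ becomes $(x,R(y),R(z))_r + (R(y),x,R(z))_m=0$, an instance of $(a,b,c)_m+(b,a,c)_r=0$. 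Since $(A,\prec,\succ)$ is an alternative dendriform dialgebra, all of these hold, which completes the proof.

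The main obstacle is purely bookkeeping: the computation of the dictionary in the third step requires care to distinguish a derived operation such as $\succ_R$ from post-composition of $\succ$ with $R$, and to track which of the four primitive operations appears under $R$; once the intertwining relations are in hand, each expansion is a two-line calculation. I would note that alternativity of $(A,\prec,\succ)$ is never invoked beyond the fact that it is a dendriform dialgebra, so the very same argument specializes to the classical construction of a quadri-algebra from a dendriform dialgebra with a Rota-Baxter operator of weight zero, where all associators on both sides vanish identically and the dictionary reduces to $0=0$.
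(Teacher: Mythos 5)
Your proof is correct and takes essentially the same approach as the paper: the paper verifies the first quadrialgebra identity by exactly your reduction $\las x,y,z\ras_r + \las y,x,z\ras_m = (x,R(y),R(z))_r + (R(y),x,R(z))_m$ and states that the remaining identities are checked similarly. Your systematic dictionary of all nine associators (which I have checked entry by entry) simply makes explicit the "similar way" the paper leaves to the reader.
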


\begin{proof}
    This result follows from \cite{BaiBellierGuoNi12}, as alternative quadrialgebras are a particular case of splitting of operations.
\end{proof}

\begin{remark}
    The Rota-Baxter operator $R$ is a homomorphism of alternative dendriform dialgebras between $A$ and
    the horizontal alternative dendriform structure associated to the alternative quadri-algebra constructed with $R$:
    \begin{align*}
        R(x \prec y)
        &=
        R( x {\swarrow_{_R}} y + x {\nwarrow_{_R}} y )
        =
        R \big( R(x) \prec_R y + x \prec_R R(y) \big)
        =
        R(x) \prec_{_R} R(y) \\
        R(x \succ y)
        &=
        R( x {\nearrow_{_R}} y + x {\searrow_{_R}} y )
        =
        R \big( R(x) \succ_R y + x \succ_R R(y) \big)
        =
        R(x) \succ_{_R} R(y)
    \end{align*}
    The vertical alternative dendriform structure associated to the alternative quadri-algebra constructed with $R$
    coincides with the dendriform structure constructed from the associated alternative algebra $(A,\ast)$ and $R$:
    \begin{align*}
        x \wedge_R y
        &=
        x {\nearrow_{_R}} y + x {\nwarrow_{_R}} y
        =
        x \prec_R R(y) + x \succ_R R(y)
        =
        x \ast R(y) \\
        x \vee_R y
        &=
        x {\searrow_{_R}} y + x {\swarrow_{_R}} y
        =
        R(x) \succ_R y + R(x) \prec_R y
        =
        R(x) \ast y
    \end{align*}
\end{remark}

\begin{lemma} \label{lem:commRB}
    Let $(A,\ast)$ be an alternative algebra and $R_1$, $R_2$ two commuting Rota-Baxter operators on $A$.
    Then $R_2$ is a Rota-Baxter operator on the alternative dendriform dialgebra $(A,\prec_R,\succ_R)$.
\end{lemma}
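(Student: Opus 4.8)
The plan is to unwind the definitions and reduce the claim to the Rota-Baxter identity for $R_2$ on the associated alternative algebra $(A,\ast)$, using only the hypothesis $R_1R_2 = R_2R_1$. Here $\prec_R,\succ_R$ is the alternative dendriform dialgebra built from $(A,\ast)$ and $R_1$, so $x \prec_{R_1} y = x \ast R_1(y)$ and $x \succ_{R_1} y = R_1(x) \ast y$. What has to be checked is that $R_2$ satisfies
\begin{align*}
    R_2(x) \succ_{R_1} R_2(y) &= R_2\big( R_2(x) \succ_{R_1} y + x \succ_{R_1} R_2(y) \big), \\
    R_2(x) \prec_{R_1} R_2(y) &= R_2\big( R_2(x) \prec_{R_1} y + x \prec_{R_1} R_2(y) \big)
\end{align*}
for all $x,y \in A$.

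First I would expand the left-hand side of the $\succ_{R_1}$ identity: $R_2(x) \succ_{R_1} R_2(y) = R_1(R_2(x)) \ast R_2(y) = R_2(R_1(x)) \ast R_2(y)$, where the last equality uses the commutativity of $R_1$ and $R_2$. Applying the Rota-Baxter identity for $R_2$ on $(A,\ast)$ to the pair $R_1(x), y$ turns this into $R_2\big( R_2(R_1(x)) \ast y + R_1(x) \ast R_2(y) \big)$. Commuting $R_1$ past $R_2$ once more gives $R_2(R_1(x)) \ast y = R_1(R_2(x)) \ast y = R_2(x) \succ_{R_1} y$, while $R_1(x) \ast R_2(y) = x \succ_{R_1} R_2(y)$ directly, so the right-hand side is recovered. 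The $\prec_{R_1}$ identity is entirely symmetric: $R_2(x) \prec_{R_1} R_2(y) = R_2(x) \ast R_1(R_2(y)) = R_2(x) \ast R_2(R_1(y))$, and the Rota-Baxter identity for $R_2$ applied to the pair $x, R_1(y)$ yields $R_2\big( R_2(x) \ast R_1(y) + x \ast R_2(R_1(y)) \big) = R_2\big( R_2(x) \prec_{R_1} y + x \prec_{R_1} R_2(y) \big)$, again after moving $R_1$ through $R_2$.

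I do not expect a genuine obstacle here: the argument is short and purely formal, and the only delicate point is keeping track of which operator acts on which factor so that the single appeal to $R_1R_2 = R_2R_1$ needed in each of the two identities is made in the right place. In the write-up I would also record the consequence that is presumably the point of the lemma, namely that combining it with Proposition~\ref{prop:dendritoquadri} produces an alternative quadrialgebra on $A$ out of the pair of commuting Rota-Baxter operators $R_1,R_2$, in parallel with the classical quadri-algebra construction of Aguiar and Loday.
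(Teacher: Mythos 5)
Your proof is correct and follows essentially the same route as the paper: expand the dendriform products into $\ast$ and $R_1$, use $R_1R_2=R_2R_1$ to bring $R_2$ to the outside, apply the Rota--Baxter identity for $R_2$ on $(A,\ast)$, and commute back. The only difference is that you write out the $\prec_{R_1}$ case in full where the paper dismisses it as "similar," which is a harmless (indeed welcome) addition.
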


\begin{proof}
    \begin{align*}
        R_2(x) \succ_{R_1} R_2(y)
        &=
        R_1( R_2(x) ) \ast R_2(y)
        =
        R_2( R_1(x) ) \ast R_2(y) \\
        &=
        R_2 \big( R_2(R_1(x)) \ast y + R_1(x) \ast R_2(y) \big) \\
        &=
        R_2 \big( R_1(R_2(x)) \ast y + R_1(x) \ast R_2(y) \big) \\
        &=
        R_2 \big( R_2(x) \succ_{R_1} y + x \succ_{R_1} R_2(y) \big)
    \end{align*}
    Similarly, $R_2(x) \prec_{R_1} R_2(y) = R_2 \big( R_2(x) \prec_{R_1} y + x \prec_{R_1} R_2(y) \big)$.
\end{proof}

\begin{proposition}
    Let $(A,\ast)$ be an alternative algebra and $R_1$, $R_2$ two commuting Rota-Baxter operators on $A$.
    Then $A$ is an alternative quadri-algebra with the operations defined by
    \begin{align*}
        x {\nearrow} y = R_1(x) \ast R_2(y) \qquad \qquad
        x {\searrow} y = R_1(R_2(x)) \ast y \\
        x {\swarrow} y = R_2(x) \ast R_1(y) \qquad \qquad
        x {\nwarrow} y = x \ast R_1(R_2(y))
    \end{align*}
\end{proposition}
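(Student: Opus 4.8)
The strategy is to realize this alternative quadrialgebra as an instance of Proposition~\ref{prop:dendritoquadri}, by first using $R_1$ to produce an alternative dendriform dialgebra and then feeding $R_2$ into that dialgebra. First I would apply the construction preceding Proposition~\ref{prop:dendritoquadri} (from \cite[Prop.~2.11]{NiBai10}) to the alternative algebra $(A,\ast)$ and the Rota-Baxter operator $R_1$: this endows $A$ with an alternative dendriform dialgebra structure given by $x \prec_{R_1} y = x \ast R_1(y)$ and $x \succ_{R_1} y = R_1(x) \ast y$.

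Next, since $R_1$ and $R_2$ commute, the Lemma immediately above shows that $R_2$ is a Rota-Baxter operator on the alternative dendriform dialgebra $(A,\prec_{R_1},\succ_{R_1})$. Then I would invoke Proposition~\ref{prop:dendritoquadri} with the dialgebra $(A,\prec_{R_1},\succ_{R_1})$ and the Rota-Baxter operator $R_2$, obtaining an alternative quadrialgebra whose four products are
\begin{align*}
    x \nearrow y &= x \succ_{R_1} R_2(y), & x \searrow y &= R_2(x) \succ_{R_1} y, \\
    x \swarrow y &= R_2(x) \prec_{R_1} y, & x \nwarrow y &= x \prec_{R_1} R_2(y).
\end{align*}
Finally I would unfold the definitions of $\prec_{R_1}$ and $\succ_{R_1}$ to get $x \nearrow y = R_1(x) \ast R_2(y)$, $x \searrow y = R_1(R_2(x)) \ast y$, $x \swarrow y = R_2(x) \ast R_1(y)$ and $x \nwarrow y = x \ast R_1(R_2(y))$, which are precisely the operations in the statement.

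Since every ingredient has already been established, there is essentially no obstacle; the only points to verify are (i) that the commutativity hypothesis $R_1 R_2 = R_2 R_1$ is exactly what the preceding Lemma requires so that $R_2$ becomes Rota-Baxter on $(A,\prec_{R_1},\succ_{R_1})$, and (ii) that the four operations produced by Proposition~\ref{prop:dendritoquadri} collapse to the stated formulas — both are immediate. One could instead verify the nine defining identities of an alternative quadrialgebra by direct substitution into the associators, using alternativity of $\ast$ and the Rota-Baxter relation for $R_1$ repeatedly, but the compositional argument above is far shorter; note also that starting from $R_2$ rather than $R_1$ would yield the same structure with $\nearrow$ and $\swarrow$ interchanged, so the order matters for matching the statement verbatim.
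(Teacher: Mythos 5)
Your proof is correct and follows essentially the same route as the paper: compose the construction of the alternative dendriform dialgebra from $(A,\ast)$ and one Rota--Baxter operator with Proposition~\ref{prop:dendritoquadri}, using the preceding Lemma to guarantee that the second (commuting) operator is Rota--Baxter on that dialgebra. The only difference is the order of the two operators --- the paper's one-line proof feeds $R_1$ into $(A,\prec_{R_2},\succ_{R_2})$, which yields the stated formulas with $\nearrow$ and $\swarrow$ interchanged (up to $R_1R_2=R_2R_1$), whereas your choice of $(A,\prec_{R_1},\succ_{R_1})$ with $R_2$ reproduces the stated operations verbatim, a point you correctly flag.
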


\begin{proof}
    Use the construction in Prop.~\ref{prop:dendritoquadri} with $R_1$ and $(A,\prec_{R_2},\succ_{R_2})$.
\end{proof}


\section{M-dendriform algebras}

In this section we define M-dendriform algebras and give different constructions for them
similarly to what we did for pre-Malcev algebras in Section \ref{sec:preMalcev}.

\begin{definition}
    A \emph{M-dendriform algebra} is a vector space $M$ endowed with two bilinear products
    $\blacktriangleright$ and $\blacktriangleleft$ satisfying the identities
    \begin{align*}
        &
        MD1(x,y,z,t) =
        ((x {\mathsmaller{\mathsmaller{\blacktriangleright}}} y) {\mathsmaller{\mathsmaller{\blacktriangleright}}} z) {\mathsmaller{\mathsmaller{\blacktriangleright}}} t
        - ((y {\mathsmaller{\mathsmaller{\blacktriangleleft}}} x) {\mathsmaller{\mathsmaller{\blacktriangleright}}} z) {\mathsmaller{\mathsmaller{\blacktriangleright}}} t
        - (z {\mathsmaller{\mathsmaller{\blacktriangleleft}}} (x {\mathsmaller{\mathsmaller{\blacktriangleright}}} y)) {\mathsmaller{\mathsmaller{\blacktriangleright}}} t
        + (z {\mathsmaller{\mathsmaller{\blacktriangleleft}}} (y {\mathsmaller{\mathsmaller{\blacktriangleleft}}} x)) {\mathsmaller{\mathsmaller{\blacktriangleright}}} t
        \\
        &\quad
        - x {\mathsmaller{\mathsmaller{\blacktriangleright}}} (y {\mathsmaller{\mathsmaller{\blacktriangleleft}}} (z {\mathsmaller{\mathsmaller{\blacktriangleleft}}} t))
        - x {\mathsmaller{\mathsmaller{\blacktriangleright}}} (y {\mathsmaller{\mathsmaller{\blacktriangleright}}} (z {\mathsmaller{\mathsmaller{\blacktriangleleft}}} t))
        - x {\mathsmaller{\mathsmaller{\blacktriangleright}}} (y {\mathsmaller{\mathsmaller{\blacktriangleleft}}} (z {\mathsmaller{\mathsmaller{\blacktriangleright}}} t))
        - x {\mathsmaller{\mathsmaller{\blacktriangleright}}} (y {\mathsmaller{\mathsmaller{\blacktriangleright}}} (z {\mathsmaller{\mathsmaller{\blacktriangleright}}} t))
        + z {\mathsmaller{\mathsmaller{\blacktriangleleft}}} (x {\mathsmaller{\mathsmaller{\blacktriangleright}}} (y {\mathsmaller{\mathsmaller{\blacktriangleleft}}} t))
        \\
        &\quad
        + z {\mathsmaller{\mathsmaller{\blacktriangleleft}}} (x {\mathsmaller{\mathsmaller{\blacktriangleright}}} (y {\mathsmaller{\mathsmaller{\blacktriangleright}}} t))
        + (y {\mathsmaller{\mathsmaller{\blacktriangleleft}}} z) {\mathsmaller{\mathsmaller{\blacktriangleleft}}} (x {\mathsmaller{\mathsmaller{\blacktriangleright}}} t)
        + (y {\mathsmaller{\mathsmaller{\blacktriangleright}}} z) {\mathsmaller{\mathsmaller{\blacktriangleleft}}} (x {\mathsmaller{\mathsmaller{\blacktriangleright}}} t)
        - (z {\mathsmaller{\mathsmaller{\blacktriangleleft}}} y) {\mathsmaller{\mathsmaller{\blacktriangleleft}}} (x {\mathsmaller{\mathsmaller{\blacktriangleright}}} t)
        - (z {\mathsmaller{\mathsmaller{\blacktriangleright}}} y) {\mathsmaller{\mathsmaller{\blacktriangleleft}}} (x {\mathsmaller{\mathsmaller{\blacktriangleright}}} t)
        \\
        &\quad
        - y {\mathsmaller{\mathsmaller{\blacktriangleleft}}} ((z {\mathsmaller{\mathsmaller{\blacktriangleleft}}} x) {\mathsmaller{\mathsmaller{\blacktriangleright}}} t)
        + y {\mathsmaller{\mathsmaller{\blacktriangleleft}}} ((x {\mathsmaller{\mathsmaller{\blacktriangleright}}} z) {\mathsmaller{\mathsmaller{\blacktriangleright}}} t)
        = 0
        \\[2pt]
        &
        MD2(x,y,z,t) =
        ((x {\mathsmaller{\mathsmaller{\blacktriangleleft}}} y) {\mathsmaller{\mathsmaller{\blacktriangleright}}} z) {\mathsmaller{\mathsmaller{\blacktriangleright}}} t
        - ((y {\mathsmaller{\mathsmaller{\blacktriangleright}}} x) {\mathsmaller{\mathsmaller{\blacktriangleright}}} z) {\mathsmaller{\mathsmaller{\blacktriangleright}}} t
        - (z {\mathsmaller{\mathsmaller{\blacktriangleleft}}} (x {\mathsmaller{\mathsmaller{\blacktriangleleft}}} y)) {\mathsmaller{\mathsmaller{\blacktriangleright}}} t
        + (z {\mathsmaller{\mathsmaller{\blacktriangleleft}}} (y {\mathsmaller{\mathsmaller{\blacktriangleright}}} x)) {\mathsmaller{\mathsmaller{\blacktriangleright}}} t
        \\
        &\quad
        - x {\mathsmaller{\mathsmaller{\blacktriangleleft}}} (y {\mathsmaller{\mathsmaller{\blacktriangleright}}} (z {\mathsmaller{\mathsmaller{\blacktriangleleft}}} t))
        - x {\mathsmaller{\mathsmaller{\blacktriangleleft}}} (y {\mathsmaller{\mathsmaller{\blacktriangleright}}} (z {\mathsmaller{\mathsmaller{\blacktriangleright}}} t))
        + z {\mathsmaller{\mathsmaller{\blacktriangleleft}}} (x {\mathsmaller{\mathsmaller{\blacktriangleleft}}} (y {\mathsmaller{\mathsmaller{\blacktriangleright}}} t))
        + (y {\mathsmaller{\mathsmaller{\blacktriangleright}}} z) {\mathsmaller{\mathsmaller{\blacktriangleright}}} (x {\mathsmaller{\mathsmaller{\blacktriangleleft}}} t)
        + (y {\mathsmaller{\mathsmaller{\blacktriangleright}}} z) {\mathsmaller{\mathsmaller{\blacktriangleright}}} (x {\mathsmaller{\mathsmaller{\blacktriangleright}}} t)
        \\
        &\quad
        - (z {\mathsmaller{\mathsmaller{\blacktriangleleft}}} y) {\mathsmaller{\mathsmaller{\blacktriangleright}}} (x {\mathsmaller{\mathsmaller{\blacktriangleleft}}} t)
        - (z {\mathsmaller{\mathsmaller{\blacktriangleleft}}} y) {\mathsmaller{\mathsmaller{\blacktriangleright}}} (x {\mathsmaller{\mathsmaller{\blacktriangleright}}} t)
        - y {\mathsmaller{\mathsmaller{\blacktriangleright}}} ((z {\mathsmaller{\mathsmaller{\blacktriangleleft}}} x) {\mathsmaller{\mathsmaller{\blacktriangleleft}}} t)
        - y {\mathsmaller{\mathsmaller{\blacktriangleright}}} ((z {\mathsmaller{\mathsmaller{\blacktriangleleft}}} x) {\mathsmaller{\mathsmaller{\blacktriangleright}}} t)
        - y {\mathsmaller{\mathsmaller{\blacktriangleright}}} ((z {\mathsmaller{\mathsmaller{\blacktriangleright}}} x) {\mathsmaller{\mathsmaller{\blacktriangleleft}}} t)
        \\
        &\quad
        - y {\mathsmaller{\mathsmaller{\blacktriangleright}}} ((z {\mathsmaller{\mathsmaller{\blacktriangleright}}} x) {\mathsmaller{\mathsmaller{\blacktriangleright}}} t)
        + y {\mathsmaller{\mathsmaller{\blacktriangleright}}} ((x {\mathsmaller{\mathsmaller{\blacktriangleleft}}} z) {\mathsmaller{\mathsmaller{\blacktriangleleft}}} t)
        + y {\mathsmaller{\mathsmaller{\blacktriangleright}}} ((x {\mathsmaller{\mathsmaller{\blacktriangleleft}}} z) {\mathsmaller{\mathsmaller{\blacktriangleright}}} t)
        + y {\mathsmaller{\mathsmaller{\blacktriangleright}}} ((x {\mathsmaller{\mathsmaller{\blacktriangleright}}} z) {\mathsmaller{\mathsmaller{\blacktriangleleft}}} t)
        + y {\mathsmaller{\mathsmaller{\blacktriangleright}}} ((x {\mathsmaller{\mathsmaller{\blacktriangleright}}} z) {\mathsmaller{\mathsmaller{\blacktriangleright}}} t)
        = 0
        \\[2pt]
        &
        MD3(x,y,z,t)=
        ((x {\mathsmaller{\mathsmaller{\blacktriangleleft}}} y) {\mathsmaller{\mathsmaller{\blacktriangleleft}}} z) {\mathsmaller{\mathsmaller{\blacktriangleright}}} t
        + ((x {\mathsmaller{\mathsmaller{\blacktriangleright}}} y) {\mathsmaller{\mathsmaller{\blacktriangleleft}}} z) {\mathsmaller{\mathsmaller{\blacktriangleright}}} t
        - ((y {\mathsmaller{\mathsmaller{\blacktriangleleft}}} x) {\mathsmaller{\mathsmaller{\blacktriangleleft}}} z) {\mathsmaller{\mathsmaller{\blacktriangleright}}} t
        - ((y {\mathsmaller{\mathsmaller{\blacktriangleright}}} x) {\mathsmaller{\mathsmaller{\blacktriangleleft}}} z) {\mathsmaller{\mathsmaller{\blacktriangleright}}} t
        \\
        &\quad
        - (z {\mathsmaller{\mathsmaller{\blacktriangleright}}} (x {\mathsmaller{\mathsmaller{\blacktriangleleft}}} y)) {\mathsmaller{\mathsmaller{\blacktriangleright}}} t
        - (z {\mathsmaller{\mathsmaller{\blacktriangleright}}} (x {\mathsmaller{\mathsmaller{\blacktriangleright}}} y)) {\mathsmaller{\mathsmaller{\blacktriangleright}}} t
        + (z {\mathsmaller{\mathsmaller{\blacktriangleright}}} (y {\mathsmaller{\mathsmaller{\blacktriangleleft}}} x)) {\mathsmaller{\mathsmaller{\blacktriangleright}}} t
        + (z {\mathsmaller{\mathsmaller{\blacktriangleright}}} (y {\mathsmaller{\mathsmaller{\blacktriangleright}}} x)) {\mathsmaller{\mathsmaller{\blacktriangleright}}} t
        - x {\mathsmaller{\mathsmaller{\blacktriangleleft}}} (y {\mathsmaller{\mathsmaller{\blacktriangleleft}}} (z {\mathsmaller{\mathsmaller{\blacktriangleright}}} t))
        \\
        &\quad
        + z {\mathsmaller{\mathsmaller{\blacktriangleright}}} (x {\mathsmaller{\mathsmaller{\blacktriangleleft}}} (y {\mathsmaller{\mathsmaller{\blacktriangleleft}}} t))
        + z {\mathsmaller{\mathsmaller{\blacktriangleright}}} (x {\mathsmaller{\mathsmaller{\blacktriangleright}}} (y {\mathsmaller{\mathsmaller{\blacktriangleleft}}} t))
        + z {\mathsmaller{\mathsmaller{\blacktriangleright}}} (x {\mathsmaller{\mathsmaller{\blacktriangleleft}}} (y {\mathsmaller{\mathsmaller{\blacktriangleright}}} t))
        + z {\mathsmaller{\mathsmaller{\blacktriangleright}}} (x {\mathsmaller{\mathsmaller{\blacktriangleright}}} (y {\mathsmaller{\mathsmaller{\blacktriangleright}}} t))
        + (y {\mathsmaller{\mathsmaller{\blacktriangleleft}}} z) {\mathsmaller{\mathsmaller{\blacktriangleright}}} (x {\mathsmaller{\mathsmaller{\blacktriangleleft}}} t)
        \\
        &\quad
        + (y {\mathsmaller{\mathsmaller{\blacktriangleleft}}} z) {\mathsmaller{\mathsmaller{\blacktriangleright}}} (x {\mathsmaller{\mathsmaller{\blacktriangleright}}} t)
        - (z {\mathsmaller{\mathsmaller{\blacktriangleright}}} y) {\mathsmaller{\mathsmaller{\blacktriangleright}}} (x {\mathsmaller{\mathsmaller{\blacktriangleleft}}} t)
        - (z {\mathsmaller{\mathsmaller{\blacktriangleright}}} y) {\mathsmaller{\mathsmaller{\blacktriangleright}}} (x {\mathsmaller{\mathsmaller{\blacktriangleright}}} t)
        - y {\mathsmaller{\mathsmaller{\blacktriangleleft}}} ((z {\mathsmaller{\mathsmaller{\blacktriangleright}}} x) {\mathsmaller{\mathsmaller{\blacktriangleright}}} t)
        + y {\mathsmaller{\mathsmaller{\blacktriangleleft}}} ((x {\mathsmaller{\mathsmaller{\blacktriangleleft}}} z) {\mathsmaller{\mathsmaller{\blacktriangleright}}} t)
        = 0
        \\[2pt]
        &
        MD4(x,y,z,t) =
        ((x {\mathsmaller{\mathsmaller{\blacktriangleleft}}} y) {\mathsmaller{\mathsmaller{\blacktriangleleft}}} z) {\mathsmaller{\mathsmaller{\blacktriangleleft}}} t
        + ((x {\mathsmaller{\mathsmaller{\blacktriangleleft}}} y) {\mathsmaller{\mathsmaller{\blacktriangleright}}} z) {\mathsmaller{\mathsmaller{\blacktriangleleft}}} t
        + ((x {\mathsmaller{\mathsmaller{\blacktriangleright}}} y) {\mathsmaller{\mathsmaller{\blacktriangleleft}}} z) {\mathsmaller{\mathsmaller{\blacktriangleleft}}} t
        + ((x {\mathsmaller{\mathsmaller{\blacktriangleright}}} y) {\mathsmaller{\mathsmaller{\blacktriangleright}}} z) {\mathsmaller{\mathsmaller{\blacktriangleleft}}} t
        \\
        &\quad
        - ((y {\mathsmaller{\mathsmaller{\blacktriangleleft}}} x) {\mathsmaller{\mathsmaller{\blacktriangleleft}}} z) {\mathsmaller{\mathsmaller{\blacktriangleleft}}} t
        - ((y {\mathsmaller{\mathsmaller{\blacktriangleleft}}} x) {\mathsmaller{\mathsmaller{\blacktriangleright}}} z) {\mathsmaller{\mathsmaller{\blacktriangleleft}}} t
        - ((y {\mathsmaller{\mathsmaller{\blacktriangleright}}} x) {\mathsmaller{\mathsmaller{\blacktriangleleft}}} z) {\mathsmaller{\mathsmaller{\blacktriangleleft}}} t
        - ((y {\mathsmaller{\mathsmaller{\blacktriangleright}}} x) {\mathsmaller{\mathsmaller{\blacktriangleright}}} z) {\mathsmaller{\mathsmaller{\blacktriangleleft}}} t
        - (z {\mathsmaller{\mathsmaller{\blacktriangleleft}}} (x {\mathsmaller{\mathsmaller{\blacktriangleleft}}} y)) {\mathsmaller{\mathsmaller{\blacktriangleleft}}} t
        \\
        &\quad
        - (z {\mathsmaller{\mathsmaller{\blacktriangleright}}} (x {\mathsmaller{\mathsmaller{\blacktriangleleft}}} y)) {\mathsmaller{\mathsmaller{\blacktriangleleft}}} t
        - (z {\mathsmaller{\mathsmaller{\blacktriangleleft}}} (x {\mathsmaller{\mathsmaller{\blacktriangleright}}} y)) {\mathsmaller{\mathsmaller{\blacktriangleleft}}} t
        - (z {\mathsmaller{\mathsmaller{\blacktriangleright}}} (x {\mathsmaller{\mathsmaller{\blacktriangleright}}} y)) {\mathsmaller{\mathsmaller{\blacktriangleleft}}} t
        + (z {\mathsmaller{\mathsmaller{\blacktriangleleft}}} (y {\mathsmaller{\mathsmaller{\blacktriangleleft}}} x)) {\mathsmaller{\mathsmaller{\blacktriangleleft}}} t
        + (z {\mathsmaller{\mathsmaller{\blacktriangleright}}} (y {\mathsmaller{\mathsmaller{\blacktriangleleft}}} x)) {\mathsmaller{\mathsmaller{\blacktriangleleft}}} t
        \\
        &\quad
        + (z {\mathsmaller{\mathsmaller{\blacktriangleleft}}} (y {\mathsmaller{\mathsmaller{\blacktriangleright}}} x)) {\mathsmaller{\mathsmaller{\blacktriangleleft}}} t
        + (z {\mathsmaller{\mathsmaller{\blacktriangleright}}} (y {\mathsmaller{\mathsmaller{\blacktriangleright}}} x)) {\mathsmaller{\mathsmaller{\blacktriangleleft}}} t
        - x {\mathsmaller{\mathsmaller{\blacktriangleleft}}} (y {\mathsmaller{\mathsmaller{\blacktriangleleft}}} (z {\mathsmaller{\mathsmaller{\blacktriangleleft}}} t))
        + z {\mathsmaller{\mathsmaller{\blacktriangleleft}}} (x {\mathsmaller{\mathsmaller{\blacktriangleleft}}} (y {\mathsmaller{\mathsmaller{\blacktriangleleft}}} t))
        + (y {\mathsmaller{\mathsmaller{\blacktriangleleft}}} z) {\mathsmaller{\mathsmaller{\blacktriangleleft}}} (x {\mathsmaller{\mathsmaller{\blacktriangleleft}}} t)
        \\
        &\quad
        + (y {\mathsmaller{\mathsmaller{\blacktriangleright}}} z) {\mathsmaller{\mathsmaller{\blacktriangleleft}}} (x {\mathsmaller{\mathsmaller{\blacktriangleleft}}} t)
        - (z {\mathsmaller{\mathsmaller{\blacktriangleleft}}} y) {\mathsmaller{\mathsmaller{\blacktriangleleft}}} (x {\mathsmaller{\mathsmaller{\blacktriangleleft}}} t)
        - (z {\mathsmaller{\mathsmaller{\blacktriangleright}}} y) {\mathsmaller{\mathsmaller{\blacktriangleleft}}} (x {\mathsmaller{\mathsmaller{\blacktriangleleft}}} t)
        - y {\mathsmaller{\mathsmaller{\blacktriangleleft}}} ((z {\mathsmaller{\mathsmaller{\blacktriangleleft}}} x) {\mathsmaller{\mathsmaller{\blacktriangleleft}}} t)
        - y {\mathsmaller{\mathsmaller{\blacktriangleleft}}} ((z {\mathsmaller{\mathsmaller{\blacktriangleright}}} x) {\mathsmaller{\mathsmaller{\blacktriangleleft}}} t)
        \\
        &\quad
        + y {\mathsmaller{\mathsmaller{\blacktriangleleft}}} ((x {\mathsmaller{\mathsmaller{\blacktriangleleft}}} z) {\mathsmaller{\mathsmaller{\blacktriangleleft}}} t)
        + y {\mathsmaller{\mathsmaller{\blacktriangleleft}}} ((x {\mathsmaller{\mathsmaller{\blacktriangleright}}} z) {\mathsmaller{\mathsmaller{\blacktriangleleft}}} t)
        = 0
    \end{align*}
    for all $x,y,z,t \in M$.
\end{definition}

\begin{remark}
    This definition of pre-Malcev algebras comes from applying the splitting procedure described in \cite{BaiBellierGuoNi12}
    to pre-Malcev algebras.
    The stated identities are a minimal set of generators of the S4-module generated by the obtained identities
    (we applied the module generators algorithm detailed in \cite{BremnerMadariagaPeresi14}).
\end{remark}

\begin{proposition}\label{prop:M-dendriform}
    If $(M,\blacktriangleright,\blacktriangleleft)$ is a M-dendriform algebra then the product
    $x \cdot_h y = x \mathsmaller{\blacktriangleright} y + x \mathsmaller{\blacktriangleleft} y$
    (resp. $x \cdot_v y = x \mathsmaller{\blacktriangleright} y - y \mathsmaller{\blacktriangleleft} x$)
    defines a pre-Malcev algebra: the horizontal (resp. vertical) pre-Malcev algebra associated to $M$.
\end{proposition}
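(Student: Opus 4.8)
The plan is to establish $PM(x,y,z,t)=0$ twice: once for the horizontal product $\cdot_h$ and once for the vertical product $\cdot_v$. The key point is that the four relations $MD1,\dots,MD4$ are exactly the images of the pre-Malcev identity $PM(x,y,z,t)$ under the splitting (disuccessor) procedure of \cite{BaiBellierGuoNi12} recalled in the introduction: for each $k\in\{1,2,3,4\}$, the relation $MDk$ is obtained from $PM(x,y,z,t)$ by replacing, monomial by monomial, each occurrence of $\cdot$ by $\blacktriangleright$, by $\blacktriangleleft$, or by $\blacktriangleright+\blacktriangleleft$, according to whether the $k$-th variable lies in the left factor, in the right factor, or in neither factor of that occurrence.

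For the horizontal product $x\cdot_h y=x\blacktriangleright y+x\blacktriangleleft y$ I would expand $PM(x,y,z,t)$, turning every $\cdot$ into $\blacktriangleright+\blacktriangleleft$, and verify that the result equals $MD1(x,y,z,t)+MD2(x,y,z,t)+MD3(x,y,z,t)+MD4(x,y,z,t)$ on the nose. This comparison is automatic from the tree picture: a monomial in the expanded form of $PM(x,y,z,t)$ amounts to choosing, at each internal vertex of the binary tree underlying the corresponding monomial of $PM$, one of the two operations $\blacktriangleright$ (turn left) or $\blacktriangleleft$ (turn right); following these choices from the root leads to exactly one leaf, say the $k$-th one, and then this monomial appears, with the correct sign, in $MDk$ and in no other $MDj$. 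Since each $MDk$ vanishes identically on the M-dendriform algebra $M$, this gives $PM(x,y,z,t)=0$ for $\cdot_h$, so $(M,\cdot_h)$ is pre-Malcev.

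For the vertical product $x\cdot_v y=x\blacktriangleright y-y\blacktriangleleft x$ the situation is genuinely harder, because a single occurrence of $\cdot$ is now replaced by $\blacktriangleright$ (arguments kept) together with $-\blacktriangleleft$ (arguments transposed), so the bookkeeping is no longer a plain leaf-by-leaf sum. I would expand $PM(x,y,z,t)$ under this substitution, collect the resulting multilinear polynomial in the operations $\blacktriangleright,\blacktriangleleft$, and then exhibit it as an explicit $\mathbb{Z}$-linear combination of the polynomials $MDi$ evaluated at permutations of the variables $x,y,z,t$; once such a combination is written down, $PM(x,y,z,t)=0$ for $\cdot_v$ follows immediately from the fact that each $MDi$ is identically zero on $M$. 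Producing this combination is the one nonformal step, and it is where the real difficulty lies: it amounts to checking that the degree-$4$ element $PM(x,y,z,t)$, expressed in the operations $\cdot_v$, lies in the $S_4$-submodule generated by $MD1,\dots,MD4$ inside the space of multilinear identities in two binary operations — exactly the kind of finite linear-algebra computation (an expansion map followed by row reduction) carried out in the proof of Proposition~\ref{prop:computations}, and I would run it in the same manner. I expect this last step, rather than the horizontal case, to be the main obstacle.
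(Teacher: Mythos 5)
Your proposal is sound and, for the horizontal product, genuinely more informative than the paper's one-line ``straightforward computations.'' The paper simply asserts the result of a direct expansion; you instead observe that $MD1,\dots,MD4$ are the four disuccessor components of $PM$ (with respect to the variables $x,y,z,t$ respectively) and that their sum is, monomial for monomial, the expansion of $PM$ under $\cdot\,\mapsto\,\blacktriangleright+\blacktriangleleft$. Your partition argument is correct: a full assignment of $\blacktriangleright/\blacktriangleleft$ to the internal vertices of a degree-$4$ tree determines a unique root-to-leaf path, so each expanded monomial lands in exactly one $MDk$, and the counting closes up because $\sum_k 2^{-d_k}=1$ for the leaf depths of a binary tree. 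Two small caveats: the identification of $MDk$ with the $k$-th disuccessor component of $PM$ is itself a finite check that you should state you have performed rather than call ``automatic'' (it does check out --- e.g.\ the sixteen terms of $MD1$ are exactly the sixteen expansions of the ten $PM$-monomials relative to the leaf $x$, with matching signs); and note that the operation pointing toward the distinguished leaf here is $\blacktriangleright$ on the left and $\blacktriangleleft$ on the right, the opposite of the $\prec/\succ$ convention quoted in the introduction, so be explicit about which convention you are using.

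For the vertical product your proposal is a correct plan but not yet a proof: you reduce the claim to exhibiting the expansion of $PM$ under $x\cdot_v y=x\blacktriangleright y-y\blacktriangleleft x$ as an $S_4$-module element of the span of $MD1,\dots,MD4$, and you defer that to a row-reduction computation in the style of Proposition~\ref{prop:computations} without carrying it out. That is exactly the computation hiding behind the paper's ``straightforward computations,'' so you are not on a wrong track, but as written the vertical half of the proposition remains unverified in your argument; you should either run the expansion-and-reduction you describe or record the explicit linear combination of permuted $MDi$'s, since no structural shortcut analogous to your horizontal partition argument is available there (the transposition of arguments in $-y\blacktriangleleft x$ destroys the leaf-by-leaf bookkeeping).
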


\begin{proof}
    Straightforward computations.
\end{proof}

M-dendriform algebras are closely related to bimodules for pre-Malcev algebras.

\begin{definition}
    Let $(A,\cdot)$ be a pre-Malcev algebra, $V$ be a vector space and $\ell,r \colon A \to \mathrm{gl}(V)$
    be two linear maps. Then $(V,\ell,r)$ is a \emph{bimodule} of $(A,\cdot)$ if the following hold:
    \begin{align*}
        &
          r_x r_y r_z
        - r_x r_y \ell_z
        - r_x \ell_y r_z
        + r_x \ell_y \ell_z
        - r_{z \cdot (y \cdot x)}
        + \ell_y r_{z \cdot x}
        + \ell_{z \cdot y} r_x
        - \ell_{y \cdot z} r_x
        \\
        & \quad
        - \ell_z r_x \ell_y
        + \ell_z r_x r_y
        = 0
        \\
        &
          r_x r_y \ell_z
        - r_x r_y r_z
        - r_x \ell_y \ell_z
        + r_x \ell_y r_z
        - \ell_z r_{y \cdot x}
        + \ell_y \ell_z r_x
        + r_{z \cdot x} r_y
        - r_{z \cdot x} \ell_y
        \\
        & \quad
        - r_{(y \cdot z) \cdot x}
        + r_{(z \cdot y) \cdot x}
        = 0
        \\
        &
          r_x \ell_{y \cdot z}
        - r_x \ell_{z \cdot y}
        - r_x r_{y \cdot z}
        + r_x r_{z \cdot y}
        - \ell_y \ell_z r_x
        + r_{y \cdot (z \cdot x)}
        + r_{y \cdot x} \ell_z
        - r_{y \cdot x} r_z
        \\
        & \quad
        - \ell_z r_x r_y
        + \ell_z r_x \ell_y
        = 0
        \\
        &
          \ell_{(x \cdot y) \cdot z}
        - \ell_{(y \cdot x) \cdot z}
        - \ell_{z \cdot (x \cdot y)}
        + \ell_{z \cdot (y \cdot x)}
        - \ell_x \ell_y \ell_z
        + \ell_z \ell_x \ell_y
        + \ell_{y \cdot z} \ell_x
        - \ell_{z \cdot y} \ell_x
        \\
        & \quad
        - \ell_y \ell_{z \cdot x}
        + \ell_y \ell_{x \cdot z}
        = 0
    \end{align*}
    for all $x,y,z \in A$.
\end{definition}

\begin{proposition}
    $(M,\blacktriangleright,\blacktriangleleft)$ is a M-dendriform algebra if and only if
    $(M,\cdot_h)$ (resp. $(M,\cdot_v)$) is a pre-Malcev algebra and
    $(M,L_\blacktriangleleft,R_\blacktriangleright)$ (resp. $(M,L_\blacktriangleright,{-}L_\blacktriangleright)$)
    is a bimodule for $(M,\cdot_h)$ (resp. $(M,\cdot_v)$).
\end{proposition}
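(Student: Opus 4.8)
The plan is to reduce the equivalence to a comparison of multilinear polynomial identities of degree $4$ in the two operations $\blacktriangleright,\blacktriangleleft$, in the spirit of Proposition~\ref{prop:computations}. First I would expand the pre-Malcev identity $PM$ of Section~\ref{sec:preMalcev} after substituting the horizontal product $x\cdot_h y=x\blacktriangleright y+x\blacktriangleleft y$ for $\cdot$; denote the resulting element of the free algebra on $\blacktriangleright,\blacktriangleleft$ by $\overline{PM}$. Next I would write out each of the four bimodule axioms with $\ell=L_\blacktriangleleft$, $r=R_\blacktriangleright$ and $\cdot=\cdot_h$, and apply it to a generic element $t\in M$, so that every operator word becomes an element of that same free algebra; denote these by $\widehat{B}_1,\widehat{B}_2,\widehat{B}_3,\widehat{B}_4$. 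The set $\{MD1,MD2,MD3,MD4\}$ and the set $\{\overline{PM},\widehat{B}_1,\widehat{B}_2,\widehat{B}_3,\widehat{B}_4\}$ then lie in the same space of degree-$4$ $\mathsf{BB}$-monomials, and it is enough to prove that the $S_4$-submodules they generate coincide: since every identity in sight is concentrated in degree $4$, equality of these submodules forces equality of the operadic ideals they generate, the higher-degree components being determined by liftings, hence equality of the two varieties.

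The comparison becomes transparent once two bookkeeping facts are isolated. The first is that $MD1+MD2+MD3+MD4=\overline{PM}$ as an identity in the free algebra on two binary operations. This is what the splitting procedure of \cite{BaiBellierGuoNi12} yields when applied to the single identity $PM$, with $MD_j$ arising from the $j$-th distinguished leaf, and it refines the horizontal part of Proposition~\ref{prop:M-dendriform}; concretely, for each degree-$4$ monomial $m$ the four split versions of $m$ add up, term by term, to the expansion of $m$ with $\cdot_h$ in place of $\cdot$, because following the instructions of a fixed sign assignment from the root singles out exactly one leaf. The second is that, up to a relabelling of the four variables, each $MD_j$ is one of the $\widehat{B}_i$: since $r_a=R_\blacktriangleright(a)$ is a right multiplication, the relevant relabelling reverses the order of the arguments, $(x,y,z,t)\mapsto(t,z,y,x)$, and the match is checked monomial by monomial — for example $r_a r_b r_c$ evaluated as $r_xr_yr_z(t)=((t\blacktriangleright z)\blacktriangleright y)\blacktriangleright x$ corresponds to the leading term $((x\blacktriangleright y)\blacktriangleright z)\blacktriangleright t$ of $MD1$, while $\ell_{z\cdot_h y}r_x(t)-\ell_{y\cdot_h z}r_x(t)$ reproduces the four terms of $MD1$ of the shape $u\blacktriangleleft(x\blacktriangleright t)$, and so on through $MD1,\dots,MD4$. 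This is structurally to be expected: the four bimodule axioms encode the four conditions obtained by letting one of the four arguments of $PM$ range over the module, which is the same bookkeeping that produces $MD1,\dots,MD4$ from $PM$.

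Granting these two facts, both implications are immediate. If $(M,\blacktriangleright,\blacktriangleleft)$ is an M-dendriform algebra, then $\overline{PM}=MD1+MD2+MD3+MD4=0$, so $(M,\cdot_h)$ is pre-Malcev, and each $\widehat{B}_i=0$ for every $t$, which is precisely the assertion that $(M,L_\blacktriangleleft,R_\blacktriangleright)$ is a bimodule of $(M,\cdot_h)$; conversely the four bimodule axioms are, up to relabelling, the four identities $MD1,\dots,MD4$, so their vanishing makes $M$ an M-dendriform algebra. The vertical case is handled the same way, now with $x\cdot_v y=x\blacktriangleright y-y\blacktriangleleft x$, $\ell=L_\blacktriangleright$ and $r=-L_\blacktriangleright$; here $r=-\ell$, so the bimodule axioms collapse to expressions in $L_\blacktriangleright$ alone, and the same term-by-term comparison gives $MD1+\dots+MD4=\overline{PM}$ (now the expansion via $\cdot_v$) together with the matching of each $MD_j$ with a $\widehat{B}_i$ up to relabelling. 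I expect the one real obstacle to be clerical: each axiom expands into roughly a hundred monomials, and a single sign slip in juggling the splitting rules, the composition order inside $r_a r_b r_c$, and the relabelling of variables would destroy the match; as a safeguard I would also rerun the row-reduction test of Proposition~\ref{prop:computations}, assembling the coefficient vectors of all $S_4$-translates of the two generating sets into one rational matrix and checking that its rank equals the rank of either block alone, which certifies that the two submodules, hence the two varieties, agree.
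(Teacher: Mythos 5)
Your proposal is correct and follows essentially the same route as the paper, whose one-line proof amounts to exactly your identification: the four M-dendriform identities are, up to relabelling of the variables, the four bimodule axioms for $(L_{\blacktriangleleft},R_{\blacktriangleright})$ evaluated at $t$, and their sum is the expansion of $PM$ under $\cdot_h$. One small caveat: the single substitution $(x,y,z,t)\mapsto(t,z,y,x)$ you propose only matches $MD1$ with the first axiom — for instance $MD2$ and $MD3$ require $(x,y,z,t)\mapsto(z,t,y,x)$ and $(x,y,z,t)\mapsto(y,z,t,x)$ respectively, while $MD4$ needs no relabelling at all — but this is harmless since each $MD_j$ still matches exactly one axiom and your final $S_4$-module rank check is relabelling-independent.
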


\begin{proof}
    It follows from Prop.~\ref{prop:M-dendriform} above and the definition of representation of a pre-Malcev algebra.
\end{proof}

\begin{remark}
    Since Malcev algebras generalize Lie algebras, M-dendriform algebras generalize L-dendriform algebras.
    Recall that a \emph{L-den\-dri\-form algebra} \cite{BaiLiuNi10} is a vector space with two bilinear operations
    $\blacktriangleleft$, $\blacktriangleright$ satisfying
    \begin{align*}
        LD1(x,y,z)
        &=
          x \mathsmaller{\mathsmaller{\blacktriangleright}} (y \mathsmaller{\mathsmaller{\blacktriangleright}} z)
        - (x \mathsmaller{\mathsmaller{\blacktriangleright}} y) \mathsmaller{\mathsmaller{\blacktriangleright}} z
        - (x \mathsmaller{\mathsmaller{\blacktriangleleft}} y) \mathsmaller{\mathsmaller{\blacktriangleright}} z
        - y \mathsmaller{\mathsmaller{\blacktriangleright}} (x \mathsmaller{\mathsmaller{\blacktriangleright}} z)
        + (y\mathsmaller{\mathsmaller{\blacktriangleleft}}x) \mathsmaller{\mathsmaller{\blacktriangleright}} z
        + (y \mathsmaller{\mathsmaller{\blacktriangleright}} x) \mathsmaller{\mathsmaller{\blacktriangleright}} z
        \\
        LD2(x,y,z)
        &=
          x \mathsmaller{\mathsmaller{\blacktriangleright}} (y \mathsmaller{\mathsmaller{\blacktriangleright}} z)
        - (x \mathsmaller{\mathsmaller{\blacktriangleright}} y) \mathsmaller{\mathsmaller{\blacktriangleleft}} z
        - y \mathsmaller{\mathsmaller{\blacktriangleleft}} (x \mathsmaller{\mathsmaller{\blacktriangleright}} z)
        - y \mathsmaller{\mathsmaller{\blacktriangleleft}} (x \mathsmaller{\mathsmaller{\blacktriangleleft}} z)
        + (y \mathsmaller{\mathsmaller{\blacktriangleleft}} x) \mathsmaller{\mathsmaller{\blacktriangleleft}} z.
    \end{align*}
\end{remark}

\begin{remark}
    M-dendriform algebras do not generalize alternative dendriform algebras:
    the corresponding sets of defining identities do not generate the same $S_4$-module.
\end{remark}

Examples of M-dendriform algebras can be constructed from pre-Malcev algebras with Rota-Baxter operators of weight zero.
Recall that a \emph{Rota-Baxter operator of weight zero} on a pre-Malcev algebra $(M,- \cdot -)$ is a linear map $R \colon M \to M$ such that
\[
R(x) \cdot R(y) = R\big( R(x) \cdot y + x \cdot R(y) \big) \qquad \forall x,y \in M.
\]

\begin{proposition}
    If $R \colon M \to M$ is a Rota-Baxter operator on a pre-Malcev algebra $(M,- \cdot -)$ then
    there exists a M-dendriform algebraic structure on $M$ given by
    \[
        x \mathsmaller{\blacktriangleright} y = R(x) \cdot y, \quad
        x \mathsmaller{\blacktriangleleft}  y = x \cdot R(y) \qquad \forall x,y \in M.
    \]
\end{proposition}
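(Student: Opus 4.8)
The plan is to verify that the operations $x\blacktriangleright y = R(x)\cdot y$ and $x\blacktriangleleft y = x\cdot R(y)$ satisfy each of the four M-dendriform identities $MD1,\dots,MD4$, by direct substitution followed by repeated use of the weight-zero Rota-Baxter identity $R(u)\cdot R(v) = R\big(R(u)\cdot v + u\cdot R(v)\big)$, in exactly the spirit of the proof of Proposition~\ref{prop:Malcev-RB} (where a Rota-Baxter operator on a Malcev algebra produced a pre-Malcev algebra and everything collapsed onto Sagle's identity). Conceptually this is the ``splitting via a Rota-Baxter operator'' mechanism: the horizontal product $x\cdot_h y = x\blacktriangleright y + x\blacktriangleleft y = R(x)\cdot y + x\cdot R(y)$ satisfies $R(x\cdot_h y) = R(x)\cdot R(y)$ by the Rota-Baxter condition, so $R$ intertwines the new product with the old one, and M-dendriform is to pre-Malcev what L-dendriform is to pre-Lie; one expects the construction to succeed for the same formal reason it does in the associative and pre-Lie cases, and the proof is the explicit check.

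I would proceed identity by identity. First I would record the rewriting rules coming from the definitions, e.g. $(x\blacktriangleright y)\blacktriangleright z = R(R(x)\cdot y)\cdot z$, $(x\blacktriangleleft y)\blacktriangleright z = R(x\cdot R(y))\cdot z$, $x\blacktriangleright(y\blacktriangleleft z) = R(x)\cdot(y\cdot R(z))$, and so on for the remaining association types occurring in $MD1,\dots,MD4$. Substituting these into a given $MDi$ produces a sum of terms $w\cdot t$, $w\cdot(y\cdot R(t))$, $w\cdot(y\cdot(z\cdot R(t)))$, and the like, with $w$ a $\cdot$-word in $R(x),R(y),R(z)$ and the surviving bare variables. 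The crucial step is then to apply the Rota-Baxter rewriting wherever the argument of an outer left multiplication by an element of the image of $R$ contains a subproduct both of whose factors lie in the image of $R$; this merges the many $R$-heavy monomials into genuine iterated products of $R(x),R(y),R(z)$ applied to the bare variables. I expect what remains to be a linear combination of instances of the pre-Malcev identity $PM$ evaluated at $R(x),R(y),R(z),t$ (and at mild relabellings thereof), together with further applications of the Rota-Baxter identity; since $(M,\cdot)$ is pre-Malcev this sum vanishes. A more structured variant uses the expressions of $MD1,\dots,MD4$ in terms of the L-dendriform expressions $LD1,LD2$ recorded in the Remark above, together with the elementary observation that for these particular operations $LD1$ and $LD2$ equal left-symmetric expressions $PL$ evaluated at $R$-images (again by merging the two summands with the Rota-Baxter identity), and then matches this against the decomposition of $PM$ through $PL$ from the remark on pre-Lie algebras; each $MDi$ then collapses to a rearrangement of $PM(R(x),R(y),R(z),t) = 0$.

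The main obstacle is bookkeeping, not anything conceptual: $MD4$ alone has on the order of thirty monomials, each of which expands further under the substitution, and one must keep careful track of signs and of which rearrangement of $PM$ a given $MDi$ collapses to (just as, in Proposition~\ref{prop:Malcev-RB}, a single combination collapsed onto Sagle's identity). In practice I would discharge the four identities with the computer-algebra framework from the introduction: replace $\blacktriangleright,\blacktriangleleft$ by $R(x)\cdot y$ and $x\cdot R(y)$, impose the Rota-Baxter identity, expand $MDi$ and the candidate combination of instances of $PM$ into the free pre-Malcev algebra, and check that the difference is zero for $i = 1,2,3,4$.
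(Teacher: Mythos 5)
Your proposal is correct and matches the paper's approach: the paper's proof is just ``straightforward computations similar to those in Proposition~\ref{prop:Malcev-RB}'', i.e.\ exactly the direct substitution followed by repeated use of the weight-zero Rota--Baxter identity so that each $MDi$ collapses onto an instance of $PM$ evaluated at $R(x),R(y),R(z),t$. Your outline of the bookkeeping (and the optional computer-algebra verification) fills in precisely what the paper leaves implicit.
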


\begin{proof}
    Straightforward computations similar to these in Prop.~\ref{prop:Malcev-RB}.
\end{proof}

\begin{remark}
    We also computed the disuccessor of M-dendriform algebras (i.e. what we would call \emph{M-quadri-algebras}),
    obtaining 15 non-redundant defining identities which are too messy to be displayed here. \\
    M-quadrialgebras can be obtained from Malcev algebras and two commuting Rota-Baxter operators
    using \cite[Prop.~3.6]{GubarevKolesnikov13} and the analogous of Lemma \ref{lem:commRB}.
    A natural question arises here: can all M-quadrialgebras be obtained this way? \\
    It was also checked that M-quadri-algebras do not generalize alternative quadri-algebras:
    the $S_4$-module generated by these 15 identities is not contained in the $S_4$-module of the consequences
    in degree 4 of the identities in Definition \ref{def:altquad}.
    Compare this result with what happens in the Jordan case (see Remark~\ref{rem:J-quadri}).
\end{remark}

M-dendriform algebras are related to alternative quadri-algebras in the same way L-dendriform algebras are related to quadri-algebras.

\begin{proposition} \label{prop:quadritoM}
    Let $(A,\nearrow,\searrow,\swarrow,\nwarrow)$ an alternative quadri-algebra. Then the operations
    \[
        x \mathsmaller{\blacktriangleleft} y = x \nearrow y - y \swarrow x
        \qquad
        x \mathsmaller{\blacktriangleright} y = x \searrow y - y \nwarrow x
    \]
    define a M-dendriform structure in $A$. Moreover, all the identities of degree 4 satisfied by these operations
    in the free alternative quadri-algebra are consequences of $MDi(x,y,z,t)$, $i=1 \dots 4$.
\end{proposition}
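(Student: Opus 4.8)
The plan is to mirror the proof of Proposition~\ref{prop:computations}, replacing the expansion map from the free binary nonassociative algebra into the free alternative dendriform dialgebra by an analogous expansion map from the free algebra with two binary operations $\blacktriangleright,\blacktriangleleft$ into the free alternative quadrialgebra. Concretely, I would set up the expansion map $E_4$ sending a monomial in $\blacktriangleright,\blacktriangleleft$ to its image under the substitutions $x\mathsmaller{\blacktriangleleft}y = x\nearrow y - y\swarrow x$ and $x\mathsmaller{\blacktriangleright}y = x\searrow y - y\nwarrow x$, expressed in terms of the four quadrialgebra operations, and then make the appropriate compositions.

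First I would fix the combinatorial bookkeeping: in degree $4$ there are $5$ association types for a single binary operation, hence (with two operations $\blacktriangleright,\blacktriangleleft$) $5\cdot 2^3 = 40$ association types and $40\cdot 24 = 960$ multilinear monomials on the source side; on the target side there are $5\cdot 4^3 = 320$ association types for the four quadrialgebra operations, hence $320\cdot 24 = 7680$ multilinear monomials. I would then form the block matrix encoding $E_4$ together with the module of liftings of the nine defining identities of alternative quadrialgebras to degree $4$: the left blocks carry the coefficient vectors, with respect to the quadrialgebra monomials, of the permutations of the generators of this lifting module, the lower-left block $E$ carries the expansions of the $960$ source monomials under the substitution above, and the lower-right block is an identity matrix of size $960$ whose columns are labelled by the source monomials. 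Computing the row canonical form and selecting the rows whose leading $1$'s lie in the right part yields a (possibly redundant) generating set for the $S_4$-module of identities satisfied by $\blacktriangleright,\blacktriangleleft$ in the free alternative quadrialgebra; I would then check that a minimal subset of module generators is exactly $\{MD1,MD2,MD3,MD4\}$, which simultaneously proves that these operations define an $M$-dendriform structure and that every degree-$4$ identity they satisfy is a consequence of $MD1,\dots,MD4$.

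The first assertion — that $\blacktriangleright,\blacktriangleleft$ form an $M$-dendriform algebra — can alternatively be verified directly: substituting the formulas for $\blacktriangleright,\blacktriangleleft$ into each $MDi$ and expanding via the bilinear operations $\nearrow,\searrow,\swarrow,\nwarrow$, one rewrites the result as a combination of the nine quadrialgebra associators $\las-,-,-\ras_\bullet$ (and their $S_3$-images), exactly as in the proof of Proposition~\ref{prop:quadritoM}'s companion lemma for the $\succ,\prec$ structure; this is a routine but lengthy bookkeeping exercise and I would relegate it to the computer algebra verification. The main obstacle is the size of the linear-algebra computation — a matrix with on the order of $10^4$ columns over $\mathbb{Q}$ — together with the need to organise the liftings of the nine defining identities correctly (each identity of degree $3$ lifts to degree $4$ in several ways, by substituting $\blacktriangleright$ or $\blacktriangleleft$ into each of its three arguments and by substituting the identity into each argument of $\blacktriangleright$ or $\blacktriangleleft$); getting this lifting module complete and correct is what the proof really hinges on, since an incomplete lifting module would spuriously inflate the list of ``new'' identities. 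As in Proposition~\ref{prop:computations}, I would also record that the same procedure in degree $3$ produces no identities, confirming that $MD1,\dots,MD4$ are genuinely the lowest-degree relations.
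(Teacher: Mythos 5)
Your proposal follows essentially the same route as the paper: the same expansion map $E_4$ between the free algebra on $\blacktriangleright,\blacktriangleleft$ and the free alternative quadrialgebra, the same block matrix combining the liftings of the nine defining identities with the expansions of the $960$ source monomials, and the same extraction of the rows with leading $1$'s in the right-hand blocks, followed by the check that they generate the same $S_4$-module as $MD1,\dots,MD4$ (and that degree $3$ yields nothing). The combinatorial counts you give ($40$ and $320$ association types, $960$ and $7680$ monomials) agree with the paper's, so this is the intended argument.
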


\begin{proof}
    Similar to the proof of Prop.~\ref{prop:computations}.
    We consider the expansion map $E_4$ from the free nonassociative algebra with four bilinear operations $\mathsf{BBBB}$
    to the free algebra M-dendriform algebra $\mathsf{BB}$.
    In this case there are 40 $\mathsf{BB}$-association types in degree 4, so 960 multilinear $\mathsf{BB}$-monomials
    There are 320 $\mathsf{BBBB}$-association types in degree 4, so 7680 multilinear $\mathsf{BBBB}$-monomials.
    The matrix representing $E_4$ has size $5280 \times 8640$.
    There are 80 rows in its row canonical form whose leading 1s are in the right blocks.
    The corresponding identities linearly generate the same $S_4$-module as the defining identities for M-dendriform algebras.
    We note that the products $\blacktriangleleft$ and $\blacktriangleright$ defined above
    satisfy no identities in degree 3 in the free alternative quadri-algebra.
\end{proof}


\section{Relations between the alternative and Jordan structures}

Analogously to what happens in the associative case, the anticommutator or Jordan product $x \circ y = x \ast y + y \ast x$
in an alternative algebra $(A,\ast)$ defines a Jordan algebra structure in $A$.
Jordan algebras were first introduced by Jordan \cite{Jordan32} to formalize the notion of an algebra of observables in quantum mechanics.
In this section we show that the relations between Jordan and associative structures in Figure~\ref{fig:classic}
can be naturally generalized to relations between the same Jordan the corresponding alternative structures, as
displayed in Figure~\ref{fig:alternative}.

\begin{definition}[Albert, \cite{Albert46}]
    A \emph{Jordan algebra} is a vector space $A$ with a commutative bilinear multiplication $\circ$
    satisfying the Jordan identity
    \[
        (x \circ y) \circ (x \circ x) = x \circ (y \circ (x \circ x))
    \]
    for all $x,y \in A$.
\end{definition}

Hou, Ni and Bai \cite{HouNiBai13} introduced pre-Jordan algebras in relation to the Jordan Yang-Baxter equation and
considered them analogues of pre-Lie algebras in terms of $\mathcal{O}$-operators, that is,
the algebraic structure satisfying that its anticommutator is a Jordan algebra and its left multiplication operators
give a representation of this Jordan algebra.
They also study splitting of associativity in the Jordan setting.

\begin{definition}[Hou, Ni and Bai \cite{HouNiBai13}]
    A \emph{pre-Jordan algebra} is a vector space $A$ with a bilinear product $\cdot \colon A \to A$ such that
    \begin{align*}
    &
      (x \circ y) \cdot (z \cdot t) + (y \circ z) \cdot (x \cdot t) + (z \circ x) \cdot (y \cdot t)
    - z \cdot ((x \circ y) \cdot t) - x \cdot ((y \circ z) \cdot t)
    \\
    & \quad - y \cdot ((z \circ x) \cdot t) = 0
    \\[2pt]
    &
      x \cdot (y \cdot (z \cdot t)) + z \cdot (y \cdot (x \cdot t)) + ((x \circ z) \circ y) \cdot t
    - z \cdot ((x \circ y) \cdot t) - x \cdot ((y \circ z) \cdot t)
    \\
    & \quad - y \cdot ((z \circ x) \cdot t) = 0
    \end{align*}
    for all $x,y,z,t \in A$, where $x \circ y = x \cdot y + y \cdot x$.
\end{definition}

Pre-Jordan algebras can be obtained from both dendriform and alternative dendriform dialgebras using the
dendriform anticommutator.

\begin{proposition}
    Let $(A,\prec,\succ)$ an alternative dendriform dialgebra. Then the dendriform anticommutator
    \[
        x \cdot y = x \succ y + y \prec x
    \]
    defines a pre-Jordan structure in $A$.
\end{proposition}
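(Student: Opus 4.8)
The plan is to use the characterization of pre-Jordan algebras indicated above: $(A,\cdot)$ is a pre-Jordan algebra precisely when the anticommutator $x \circ y = x \cdot y + y \cdot x$ is a Jordan product and the left multiplication $L$, $L_x(y) = x \cdot y$, is a representation of the Jordan algebra $(A,\circ)$. Indeed, evaluating the two defining identities of a pre-Jordan algebra on the fourth argument $t$ turns the first into $[L_{x\circ y},L_z] + [L_{y\circ z},L_x] + [L_{z\circ x},L_y] = 0$ and the second into $L_xL_yL_z + L_zL_yL_x + L_{(x\circ z)\circ y} = L_zL_{x\circ y} + L_xL_{y\circ z} + L_yL_{z\circ x}$, which are exactly the two Jordan bimodule axioms for $L$. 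So it is enough to establish these two facts for $x\cdot y = x \succ y + y \prec x$.

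For the first fact, a direct expansion gives $x\circ y = (x\succ y + y\prec x) + (y\succ x + x\prec y) = x\ast y + y\ast x$, where $x\ast y = x\prec y + x\succ y$ is the product of the associated alternative algebra of $(A,\prec,\succ)$. Since the anticommutator of an alternative algebra is a Jordan product, $(A,\circ)$ is a Jordan algebra.

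For the second fact I would argue by a split null extension. Let $B = A \oplus A$, where the first summand carries $\ast$, the second summand $V$ is a square-zero ideal, and the cross products are $a\,v = a\succ v$, $v\,a = v\prec a$. Computing the associator of $B$ on a triple with at most one factor from $V$ and comparing it with the right, middle and left dendriform associators, one finds that $V$ in the first, middle or last slot produces respectively $(x,y,z)_r$, $(x,y,z)_m$, $(x,y,z)_\ell$ (after renaming the arguments); hence the four defining identities of an alternative dendriform dialgebra say exactly that this associator is alternating, i.e. that $B$ is an alternative algebra (equivalently, $L^\succ$ and $R^\prec$, with $L^\succ_a(v)=a\succ v$ and $R^\prec_a(v)=v\prec a$, furnish the natural alternative bimodule of $(A,\ast)$, in the sense of \cite{NiBai10}). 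Consequently $B^+$ is a special Jordan algebra containing $V$ as a square-zero ideal, so $V$ is a Jordan bimodule of $B^+/V \cong (A,\circ)$ with action $a\circ v = a\,v + v\,a = a\succ v + v\prec a = a\cdot v = L_a(v)$. Thus $L$ is a representation of $(A,\circ)$ and, with the first fact, $(A,\cdot)$ is a pre-Jordan algebra.

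The step needing the most care is the bookkeeping that matches the four alternative dendriform identities with the alternating-associator condition for $B$; once the associators are placed in the correct slots the identities come out one at a time after relabeling. A more computational alternative, in the spirit of Proposition~\ref{prop:computations}, would be to build the expansion map $E_4$ from the free binary nonassociative algebra into the free alternative dendriform dialgebra via $x\cdot y = x\succ y + y\prec x$ and verify by (computer-assisted) linear algebra that the two pre-Jordan identities lie in, and module-generate, the space of degree-$4$ identities satisfied by this product.
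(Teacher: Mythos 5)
Your proof is correct, but it follows a genuinely different route from the paper's. The paper disposes of this proposition with the same computer-algebra technique as Proposition~\ref{prop:computations}: it builds the expansion map $E_4$ from the free binary nonassociative algebra into the free alternative dendriform dialgebra via $x \cdot y = x \succ y + y \prec x$, row-reduces the block matrix, and reads off that the identities satisfied in degree $4$ are exactly the pre-Jordan identities. You instead give a conceptual argument: reduce to the Hou--Ni--Bai characterization (anticommutator Jordan plus $L$ a representation), observe that $x \circ y = x \ast y + y \ast x$ is the Jordan product of the associated alternative algebra, and obtain the representation property from the split null extension $B = A \oplus V$ with $a\,v = a \succ v$, $v\,a = v \prec a$. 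Your bookkeeping is right: $V$ in the first, middle, and last slot of the associator of $B$ yields the right, middle, and left dendriform associators respectively, the four defining identities are exactly the sign-change conditions for the adjacent transpositions $(12)$ and $(23)$ in each configuration, and triples with two or more $V$-entries vanish, so $B$ is alternative (this is the statement that $(L_\succ, R_\prec, A)$ is a bimodule of $(A,\ast)$ in \cite{NiBai10}); then $B^+$ is Jordan and $V$ becomes a Jordan module for $(A,\circ)$ with action $L_a = a \cdot {-}$. Your approach buys a machine-free, checkable proof that also explains \emph{why} the result holds and places it in the bimodule framework of Figure~\ref{fig:alternative}; what it does not deliver is the stronger information the computational method provides elsewhere in the paper, namely that the pre-Jordan identities generate \emph{all} degree-$4$ identities of the dendriform anticommutator (a claim the present proposition does not make, so nothing is lost here). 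Your closing remark correctly identifies the paper's actual method as the computational alternative.
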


\begin{proof}
    It follows from \cite[Proposition 2.29(a)]{BaiBellierGuoNi12}.
\end{proof}

Bai and Hou \cite{BaiHou12} introduced J-dendriform algebras as the Jordan algebraic analogue of dendriform dialgebras
in the sense that the anticommutator of the sum of the two operations is a Jordan algebra.
Also, they are related to pre-Jordan algebras in the same way that pre-Jordan algebras are related to Jordan algebras.
They showed that a Rota-Baxter operator on a pre-Jordan algebra or two commuting Rota-Baxter operators on a
Jordan algebra give a J-dendriform algebra.
They also proved the relation between J-dendriform algebras and quadri-algebras.

\begin{definition}[Bai and Hou \cite{BaiHou12}]\label{def:J-dendriform}
    A \emph{J-dendriform algebra} is a vector space $A$ with a two bilinear products
    $\blacktriangleleft, \blacktriangleright \colon A \to A$ such that
    \begin{align*}
    &
      (x \circ y) \mathsmaller{\blacktriangleright} (z \mathsmaller{\blacktriangleright} t )
    + (y \circ z) \mathsmaller{\blacktriangleright} (x \mathsmaller{\blacktriangleright} t)
    + (z \circ x) \mathsmaller{\blacktriangleright} (y \mathsmaller{\blacktriangleright} t)
    - x \mathsmaller{\blacktriangleright} ((y \circ z) \mathsmaller{\blacktriangleright} t)
    - y \mathsmaller{\blacktriangleright} ((z \circ x) \mathsmaller{\blacktriangleright} t)
    \\
    & \quad
    - z \mathsmaller{\blacktriangleright} ((x \circ y) \mathsmaller{\blacktriangleright} t)
    = 0
    \\[2pt]
    &
      (x \circ y) \mathsmaller{\blacktriangleright} (z \mathsmaller{\blacktriangleright} t )
    + (y \circ z) \mathsmaller{\blacktriangleright} (x \mathsmaller{\blacktriangleright} t)
    + (z \circ x) \mathsmaller{\blacktriangleright} (y \mathsmaller{\blacktriangleright} t)
    - x \mathsmaller{\blacktriangleright} (y \mathsmaller{\blacktriangleright} (z \mathsmaller{\blacktriangleright} t))
    - z \mathsmaller{\blacktriangleright} ( y \mathsmaller{\blacktriangleright} (x \mathsmaller{\blacktriangleright} t))
    \\
    & \quad
    - (y \circ (z \circ x)) \mathsmaller{\blacktriangleright} t
    \\[2pt]
    &
      (x \circ y) \mathsmaller{\blacktriangleright} (z \mathsmaller{\blacktriangleleft} t)
    + (x \cdot z) \mathsmaller{\blacktriangleleft} (y \diamond t)
    + (y \cdot z) \mathsmaller{\blacktriangleleft} (x \diamond t)
    - x \mathsmaller{\blacktriangleright} (z \mathsmaller{\blacktriangleleft} (y \diamond t))
    - y \mathsmaller{\blacktriangleright} (z \mathsmaller{\blacktriangleleft} (x \diamond t))
    \\
    & \quad
    - ((x \circ y) \cdot z) \mathsmaller{\blacktriangleleft} t
    \\[2pt]
    &
      (z \cdot y) \mathsmaller{\blacktriangleleft} (x \diamond t)
    + (x \cdot y) \mathsmaller{\blacktriangleleft} (z \diamond t)
    + (x \circ z) \mathsmaller{\blacktriangleright} (y \mathsmaller{\blacktriangleleft} t)
    - x \mathsmaller{\blacktriangleright} ((z \cdot y) \mathsmaller{\blacktriangleleft} t)
    - z \mathsmaller{\blacktriangleright} ((x \cdot y) \mathsmaller{\blacktriangleleft} t)
    \\
    & \quad
    - y \mathsmaller{\blacktriangleleft} ((x \circ z) \diamond t)
    \\[2pt]
    &
      (x \circ y) \mathsmaller{\blacktriangleright} (z \mathsmaller{\blacktriangleleft} t)
    + (x \cdot z) \mathsmaller{\blacktriangleleft} (y \diamond t)
    + (y \cdot z) \mathsmaller{\blacktriangleleft} (x \diamond t)
    - x \mathsmaller{\blacktriangleright} (y \mathsmaller{\blacktriangleright} (z \mathsmaller{\blacktriangleleft} t))
    - z \mathsmaller{\blacktriangleleft} (y \diamond (x \diamond t))
    \\
    & \quad
    - (y \cdot (x \cdot z)) \mathsmaller{\blacktriangleleft} t
    \end{align*}
    for all $x,y,z,t \in A$, where
    \begin{align*}
        &
        \qquad\qquad x \cdot y = x \mathsmaller{\blacktriangleright} y + y \mathsmaller{\blacktriangleleft} x \qquad x \diamond y
        =
        x \mathsmaller{\blacktriangleright} y + x \mathsmaller{\blacktriangleleft} y
        \\
        &
        x \circ y = x \cdot y + y \cdot x = x \diamond y + y \diamond x
        =
        x \mathsmaller{\blacktriangleright} y + x \mathsmaller{\blacktriangleleft} y
        +
        y \mathsmaller{\blacktriangleright} x + y \mathsmaller{\blacktriangleleft} x.
    \end{align*}
\end{definition}

\begin{remark}
    J-dendriform algebras do not generalize alternative dendriform algebras:
    the corresponding sets of defining identities do not generate the same $S_4$-module.
\end{remark}

J-dendriform algebras can be obtained from alternative quadri-algebras analogously to how they can be obtained from
quadri-algebras.

\begin{proposition}
    Let $(A,\nearrow,\searrow,\swarrow,\nwarrow)$ an alternative quadri-algebra. Then the operations
    \[
        x \mathsmaller{\blacktriangleleft} y = x \nearrow y + y \swarrow x
        \qquad
        x \mathsmaller{\blacktriangleright} y = x \searrow y + y \nwarrow x
    \]
    define a J-dendriform structure in $A$. Moreover, all the identities of degree 4 satisfied by these operations
    in the free alternative quadri-algebra are consequences of the identities in Def.~\ref{def:J-dendriform}.
\end{proposition}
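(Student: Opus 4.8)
The argument runs parallel to the proofs of Propositions~\ref{prop:computations} and~\ref{prop:quadritoM}, the only change being that the two derived operations are now $x \blacktriangleleft y = x \nearrow y + y \swarrow x$ and $x \blacktriangleright y = x \searrow y + y \nwarrow x$, which use $+$ where the M-dendriform construction used $-$. The plan is to encode these formulas as an expansion map $E_4$ in degree $4$ and to extract, by modular row reduction, a set of $S_4$-module generators for all multilinear degree-$4$ identities they force in the free alternative quadrialgebra.

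First I would fix source and target. The source is the free nonassociative algebra $\mathsf{BB}$ on the two operations $\blacktriangleleft,\blacktriangleright$: in degree $4$ it has $5\cdot 2^3 = 40$ association types and hence $40\cdot 4! = 960$ multilinear monomials. The target is the free alternative quadrialgebra, i.e.\ the quotient of the free nonassociative algebra $\mathsf{BBBB}$ on $\nearrow,\searrow,\swarrow,\nwarrow$ ($5\cdot 4^3 = 320$ association types, $7680$ multilinear monomials in degree $4$) by the $S_4$-submodule of all degree-$4$ liftings of the nine defining identities of alternative quadrialgebras. The map $E_4$ replaces each occurrence of $\blacktriangleleft$ and $\blacktriangleright$ in a $\mathsf{BB}$-monomial by its definition, performs the induced substitutions, and writes the answer in the basis of $\mathsf{BBBB}$-monomials.

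Next I would assemble the block matrix exactly as in the proof of Proposition~\ref{prop:computations}, of the same shape $5280 \times 8640$ as in Proposition~\ref{prop:quadritoM}: left-hand columns indexed by $\mathsf{BBBB}$-monomials, right-hand columns indexed by $\mathsf{BB}$-monomials; the upper block rows carry the coefficient vectors of all permutations of a chosen generating set of the lifting submodule for the alternative-quadrialgebra identities (with zero right-hand part), and the lower block rows carry, for each $\mathsf{BB}$-monomial, its $E_4$-expansion in the left-hand part together with the $960\times 960$ identity in the right-hand part. Computing the row canonical form and keeping the rows whose leading $1$ lies in the right-hand part produces a (possibly redundant) generating set for the $S_4$-module of all multilinear degree-$4$ identities satisfied by $\blacktriangleleft,\blacktriangleright$ on the free alternative quadrialgebra. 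I would then rewrite the five identities of Definition~\ref{def:J-dendriform} in the same $\mathsf{BB}$-monomial basis, form the $S_4$-module they generate, and check that its row space coincides with the one just computed. Equality proves both halves at once: containment of the defining identities in the kernel gives that $(A,\blacktriangleleft,\blacktriangleright)$ is a J-dendriform algebra, and the reverse inclusion gives that these identities generate everything in degree $4$; a much smaller analogous computation in degree $3$ shows that $\blacktriangleleft,\blacktriangleright$ satisfy no identities of degree $3$ there. As a consistency check one may note that $x\blacktriangleleft y + x\blacktriangleright y = x \succ y + y \prec x$ is precisely the dendriform anticommutator on the horizontal alternative dendriform dialgebra, which already carries a pre-Jordan structure, so the proposition is saying that $(\blacktriangleleft,\blacktriangleright)$ splits that product.

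The main obstacle is the size and reliability of the linear algebra rather than anything conceptual: the $E_4$ matrix has several thousand rows and about $8{,}640$ columns, so the reduction must be done modulo a large prime and organized sparsely, in the spirit of \cite{BremnerMurakamiShestakov07}, and one must be certain that the lifting submodule used for the nine alternative-quadrialgebra relations is complete — every relation lifted by every one of the four operations, in every argument slot and in both positions — since an incomplete lifting set would spuriously enlarge the apparent identity module and break the comparison with Definition~\ref{def:J-dendriform}.
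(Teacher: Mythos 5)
Your proposal is correct and is essentially the paper's own argument: the paper proves this proposition by declaring it ``analogous to'' Proposition~\ref{prop:quadritoM}, i.e.\ by the same $5280 \times 8640$ block-matrix computation of the kernel of the expansion map $E_4$ from $\mathsf{BB}$ into the free alternative quadrialgebra, followed by a comparison of the resulting $S_4$-module with the one generated by the identities of Definition~\ref{def:J-dendriform} (and a degree-$3$ check that there are no lower-degree identities). Your added remarks on the completeness of the lifting set and on the connection with the dendriform anticommutator are sensible but do not change the method.
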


\begin{proof}
    It follows from \cite[Proposition 2.29(a)]{BaiBellierGuoNi12}.
\end{proof}

\begin{remark} \label{rem:J-quadri}
    We also computed the disuccessor of J-dendriform algebras (i.e. what we would call J-quadri-algebras),
    obtaining 15 defining identities which are too long to be displayed here.
    Interestingly enough, J-quadri-algebras generalize alternative quadri-algebras,
    the $S_4$-module generated by these 15 identities is contained in the $S_4$-module of the consequences
    in degree 4 of the identities in Definition \ref{def:altquad}.
\end{remark}


\section{Conclusions and further research}

The computations described in this paper can be extended to other structures using the trisuccessors defined in \cite{BaiBellierGuoNi12}.
This will result in the explicit definitions of tri-alternative dendriform algebras, alternative ennea-algebras, etc.,
generalizing the associative case.
We can also investigate what happens if we replace associative (or alternative) algebras by flexible algebras.
However, the interest of such computations is limited to the extent to it would enlighten other researchers.


\section*{Acknowledgements}

The author thanks Prof. M.R.~Bremner for suggesting this problem to her, for his scientific generosity,
for his constant help and support and for his useful comments and suggestions. \\

She also appreciates the referee for its useful comments and insights, which contributed to significantly improve this work.



\begin{thebibliography}{99}

\bibitem{Aguiar00}
\textsc{M.~Aguiar}:
Pre-Poisson algebras.
\emph{Lett. Math. Phys.} 54 (2000) 263--277.

\bibitem{AguiarLoday04}
\textsc{M.~Aguiar,J.-L.~Loday}:
Quadri-algebras.
\emph{J. Pure Appl. Algebra} 191 (2004) 205--221.

\bibitem{Albert46}
\textsc{A.A.~Albert}:
On Jordan algebras of linear transformations.
\emph{Trans. Amer. Math. Soc.} 59, (1946) 524--555.

\bibitem{BaiBellierGuoNi12}
\textsc{C. Bai, O. Bellier, L. Guo, X. Ni}:
Splitting of operations, Manin products, and Rota-Baxter operators.
\textit{Int. Math. Res. Not.} (2013), Vol.~2013 no.~3, 485--524.

\bibitem{BaiGuoNi13}
\textsc{C.~Bai, L.~Guo, X.~Ni}:
Relative Rota-Baxter operators and tridendriform algebras.
\emph{J. Algebra Appl.} 12 (2013), no.~7, 1350027, 18 pp.

\bibitem{BaiHou12}
\textsc{C. Bai, D. P. Hou}:
J-dendriform algebras.
\emph{Front. Math. China}
7 (2012), no.~1, 29--49.

\bibitem{BaiLiuNi10}
\textsc{C. Bai, L. G. Liu, X. Ni}:
Some results on L-dendriform algebras.
\emph{J. Geom. Phys.}
60 (2010), no.~6-8, 940--950.

\bibitem{BremnerMadariagaPeresi14}
\textsc{M. R. Bremner, S. Madariaga and L. A. Peresi}:
Structure theory for the group algebra of the symmetric group, with applications to polynomial identities for the octonions.
Preprint: arXiv:1407.3810v1.

\bibitem{BremnerMurakamiShestakov07}
\textsc{M. R. Bremner, L. I. Murakami and I. P. Shestakov}:
Nonassociative algebras.
Chapter 69 of \emph{Handbook of Linear Algebra}, Chapman \& Hall / CRC, 2007.

\bibitem{Burde06}
\textsc{D. Burde}:
Left-symmetric algebras, or pre-Lie algebras in geometry and physics.
\emph{Cent. Eur. J. Math.} 4 (2006), no.~3, 323--357.

\bibitem{EbrahimiFard02}
\textsc{K.~Ebrahimi-Fard}:
Loday-type algebras and the Rota-Baxter relation.
\emph{Lett. Math. Phys.} 61 (2002) 139--147.

\bibitem{EbrahimiFardGuo05}
\textsc{K. Ebrahimi-Fard, L. Guo}:
On products and duality of binary, quadratic, regular operads.
\emph{J. Pure Appl. Algebra}
200 (2005), no.~3, 293--317.

\bibitem{EbrahimiFardGuo08}
\textsc{K. Ebrahimi-Fard, L. Guo}:
Rota-Baxter algebras and dendriform algebras.
\emph{J. Pure Appl. Algebra} 212 (2008), no. 2, 320--339.

\bibitem{GubarevKolesnikov13}
\textsc{V.Y.~Gubarev, P.S.~Kolesnikov}:
Embedding of dendriform algebras into Rota-Baxter algebras.
\emph{Cent. Eur. J. Math.} 11 (2013), no. 2, 226--245.

\bibitem{HouNiBai13}
\textsc{D.~Hou, X.~Ni, C.~Bai}:
Pre-Jordan algebras.
\emph{Math. Scand.} 112 (2013), no.~1, 19--48.

\bibitem{Jordan32}
\textsc{P.~Jordan}:
\"Uber eine Klasse nichtassoziativer hyperkomplexer Algebren.
\emph{Nachr. Ges. Wiss. Göttingen} 1932, 569--75.

\bibitem{Kupershmidt99}
\textsc{B.A.~Kupershmidt}:
What a classical r-matrix really is.
\emph{J. Nonlinear Math. Phys.} 6 (1999), no.~4, 448--488.

\bibitem{Kuzmin68}
\textsc{E.~Kuzmin}:
Malcev algebras and their representations.
\emph{Algebra i Logika} 7 1968 no.~4, 48--69.

\bibitem{LiHouBai07}
\textsc{X.~Li, D.~Hou, C.~Bai}:
Rota-Baxter operators on pre-Lie algebras.
\emph{J. Nonlinear Math. Phys.} 14 (2007), no.~2, 269--289.

\bibitem{LiuNiBai11}
\textsc{L.~Liu, X.~Ni, C.~Bai}:
L-quadri-algebras.
\emph{Scientia Sinica Mathematica} 2011, 41(2), 105--124.

\bibitem{Loday01}
\textsc{J.-L.~Loday}:
Dialgebras.
\emph{Dialgebras and Related Operads, Lecture Notes in Mathematics} 1763 (2001) 766, 1--61.

\bibitem{LodayPirashvili93}
\textsc{J.-L.~Loday, T.~Pirashvili}:
Universal enveloping algebras of Leibniz algebras and (co)homology.
\emph{Math. Ann.} 296 (1993), no. 1, 139--158.

\bibitem{LodayRonco04}
\textsc{J.-L.~Loday, M.~Ronco}:
Trialgebras and families of polytopes.
\emph{Contemp. Math.} 346 (2004) 369--398.

\bibitem{LodayVallette12}
\textsc{J.-L. Loday, B. Vallette}:
\emph{Algebraic Operads}.
Grundlehren der Mathematischen Wissenschaften, 346.
Springer, Heidelberg, 2012.

\bibitem{Malcev55}
\textsc{A.~Malcev}:
Analytic loops.
\emph{Mat. Sb. N.S.} 36(78) (1955), 569--576.

\bibitem{Manchon11}
\textsc{D.~Manchon}:
A short survey on pre-Lie algebras.
\emph{Noncommutative geometry and physics: renormalisation, motives, index theory} 89–102,
ESI Lect. Math. Phys., Eur. Math. Soc., Zürich, 2011.

\bibitem{NiBai10}
\textsc{X.~Ni, C.~Bai}:
Prealternative algebras and prealternative bialgebras.
\emph{Pacific Journal of Mathematics} 248 (2010), 355--91.

\bibitem{Uchino08}
\textsc{K.~Uchino}:
Quantum analogy of Poisson geometry, related dendriform algebras and Rota-Baxter operators.
\emph{Lett. Math. Phys.} 85 (2008), no.~2-3, 91--109.

\bibitem{Vallette08}
\textsc{B.~Vallette}:
Manin products, Koszul duality, Loday algebras and Deligne conjecture.
\emph{Journal f\"ur die Reine und Angewandte Mathematik} 620 (2008), 105--64.

\bibitem{Zhang08}
\textsc{L. Zhang}:
The classification of 2-dimensional dendriform algebras.
Thesis for Bachelor degree, Nankai University (2008).

\end{thebibliography}
\end{document}